%
\documentclass[12pt]{amsart}
\usepackage[sans]{dsfont}
\usepackage{amsfonts,amssymb,amsbsy,amsmath,amsthm,enumerate}

\topmargin -1cm
\textheight21.4cm
\textwidth15.7cm
\oddsidemargin 0.5cm
\evensidemargin 0.5cm
\parindent0cm

\numberwithin{equation}{section}

\newtheorem{theorem}{Theorem}[section]

\newtheorem{lemma}[theorem]{Lemma}

\newtheorem{pr}[theorem]{Proposition}
\theoremstyle{definition}

\newcommand{\bel}{\begin{equation} \label}
\newcommand{\ee}{\end{equation}}
\newcommand{\one}{\mathds{1}}

\newcommand{\supp}{{\text{supp}}}

\newcommand{\rd}{{\mathbb R}^{2}}
\newcommand{\re}{{\mathbb R}}
\newcommand{\R}{{\mathbb R}}

\newcommand{\Z}{{\mathbb Z}}
\newcommand{\N}{{\mathbb N}}

\newcommand{\T}{{\mathcal T}}
\newcommand{\J}{{\mathcal J}}
\newcommand{\Td}{{\mathcal T}^2}

\newcommand{\eps}{{\epsilon}}

\def\beq{\begin{equation}}
\def\eeq{\end{equation}}
\newcommand{\bea}{\begin{eqnarray}}
\newcommand{\eea}{\end{eqnarray}}
\newcommand{\beas}{\begin{eqnarray*}}
\newcommand{\eeas}{\end{eqnarray*}}

\DeclareMathOperator*{\esssup}{ess\,sup}
\newcommand{\dth}{\dot{\theta}}
\newcommand{\domth}{{\mathcal M}_\theta}
\newcommand{\dom}{M}
\newcommand{\doml}{M_\ell}
\newcommand{\cs}{m}
\newcommand{\ze}{\mathbb Z}
\newcommand{\nft}{\|\partial_\tau \varphi_1\|_{{\rm L}^2(\cs)}}
\DeclareMathOperator*{\essinf}{ess\,inf}
\sloppy

\usepackage[normalem]{ulem}
\usepackage{color}

\newcommand\soutD{\bgroup\markoverwith
{\textcolor{blue}{\rule[.5ex]{2pt}{0.8pt}}}\ULon}
\newcommand{\Hm}[1]{\leavevmode{\marginpar{\tiny%
$\hbox to 0mm{\hspace*{-0.5mm}$\leftarrow$\hss}%
\vcenter{\vrule depth 0.1mm height 0.1mm width \the\marginparwidth}%
\hbox to 0mm{\hss$\rightarrow$\hspace*{-0.5mm}}$\\\relax\raggedright #1}}}

\begin{document}
\title[Lifshits tails for random quantum waveguides]{Lifshits tails for randomly twisted quantum waveguides}

\author[W.~Kirsch]{Werner Kirsch}
\author[D.~Krej\v{c}i\v{r}\'ik]{David Krej\v{c}i\v{r}\'ik}
\author[G.~Raikov]{Georgi Raikov}

\begin{abstract}\setlength{\parindent}{0mm}
We consider the Dirichlet Laplacian $H_\gamma$ on a 3D twisted waveguide with random Anderson-type twisting $\gamma$. We introduce the integrated density of states $N_\gamma$ for the operator $H_\gamma$, and investigate the Lifshits tails of $N_\gamma$, i.e. the asymptotic behavior of $N_\gamma(E)$ as $E \downarrow \inf {\rm supp}\, dN_\gamma$. In particular, we study the dependence of the Lifshits exponent on the decay rate of the single-site twisting at infinity.
\end{abstract}

\maketitle

{\bf  AMS 2010 Mathematics Subject Classification:} 82B44,  35R60, 47B80,  81Q10\\

{\bf  Keywords:} randomly twisted quantum waveguide,  Dirichlet Laplacian, integrated density of states, Lifshits tails

\section{Introduction}
\label{s0} \setcounter{equation}{0}

The spectral properties of quantum Hamiltonians on tubular domains (waveguides) have been actively studied for several decades (see  the monograph \cite{ek_book}, the survey \cite{krej}, and the references cited there). Recently, there has been a particular interest in the so-called {\em twisted waveguides} (see \cite{ek, ekk, bkrs, kz, bkr, bhk, r}), whose general setting we are going to describe briefly below. \\
Let $\cs \subset \rd$ be a bounded domain. Set $\dom : = \cs \times \re$. Let $\theta \in C^1(\re ; \re)$ have a bounded  derivative $\dot{\theta}$.  Define the twisted tube
$$
\domth : = \left\{{\mathcal R}_\theta(x_3) x \; | \; x=(x_{1},x_{2},x_{3}) \in \dom\right\}
$$ where ${\mathcal R}_\theta(x_3) $ is a rotation around the $x_{3}$-axis by an angle $\theta(x_{3})$, namely:
    \bel{10}
    {\mathcal R}_\theta(x_3) : = \left(
    \begin{array} {ccc}
    \cos{\theta(x_3)} & \sin{\theta(x_3)} & 0\\
    -\sin{\theta(x_3)} & \cos{\theta(x_3)} & 0\\
    0 & 0 &1
    \end{array} \right), \quad x_3 \in \re.
    \ee

We consider the Dirichlet Laplacian ${\mathcal H}_\theta$ defined through the corresponding quadratic form and suppose that $\theta$ is a random function described below in detail.
We are interested in the spectral properties
of ${\mathcal H}_\theta$, in particular in the behavior of the integrated density of states $N(E)$ of ${\mathcal H}_\theta$ for energies $E$ close to the bottom of the spectrum.

There are two immediate observations. First, if the set $m$ is invariant under rotations around the origin then the twist has no influence on the set $\mathcal{M}_{\theta} $, thus
$\mathcal{M}_{\theta} = M$. So a necessary condition for an effect of the random twist is that $m$ is not rotationally symmetric. We measure the `deviation' of $m$ from spherical symmetry
through the ground state $\varphi_{1}$ of the Dirichlet Laplacian on $m$ by the quantity $\T : = \nft$. In fact, one can prove (see \cite[Proposition 2.2]{bkrs}) that
$\T=0$ if and only if $m$ is spherical symmetric. Thus, in the following we assume that $\T\not=0$.

The second observation is the fact that a constant `twist' $\theta(x)\equiv \theta_{0}$, which is actually just a constant rotation of the set $M$,  does not effect the spectral properties of
${\mathcal H}_\theta$. Consequently, if $\tilde{\theta}(x)=\theta(x)+c$ then ${\mathcal H}_{\tilde{\theta}}$ and
${\mathcal H}_\theta$ are unitarily equivalent and have the same integrated density of states. It is thus the derivative $\dot{\theta}$ which determines the spectral properties of
${\mathcal H}_\theta$.
In this paper we consider random twists $\theta$ with $\dot{\theta}$ of the form

\begin{align}
   \dot{\theta}(x_{3})~=~\sum_{k\in\mathbb{Z}}\,\lambda_{k}(\omega)\,w(x_{3}-k) \label{mfo1}
\end{align}

where $\lambda_{k}$ are independent identically distributed random variables with common distribution $P_{0}$.
We assume that the probability measure $P_{0}$ is not concentrated in a single point and that its support ${\rm supp}\,P_{0}$ is compact and contains the origin.

The single-site twisting $w\in C^{1}$ is supposed to decay at infinity fast enough, namely
\bel{w1}
    |w(s)| \leq C (1 + |s|)^{-\alpha}, \quad s \in \re,
    \ee
for some $\alpha>1$.

Under these assumptions we show that the spectrum $\Sigma=\sigma(\mathcal{H}_{\theta}) $ is (almost surely) non random and the bottom $\Sigma_{0}$ of $\Sigma$
is the ground state energy $\mu_{1}$ of the Dirichlet Laplacian on $m$.

Our main results concern the asymptotic behavior of the integrated density of states $N$ of $\mathcal{H}_{\theta}$ near $\Sigma_0$.
For disordered systems this behavior is usually characterized by a very fast decay of the integrated density of states,
and is known as a {\em Lifshits-tail behavior}. For an overview on this topic see e.\,g. 	\cite{kme}, and references given there. Lifshits tails  concerning various random 2D waveguides were considered in \cite{klst, najar}. Related spectral properties were studied in \cite{bv0, bv}.\\

We will show that under suitable assumptions
    \bel{m50}
    \lim_{E \downarrow 0} \frac{\ln{|\ln{N(\Sigma_0 + E)}|}}{\ln{E}} = -\varkappa
    \ee
    with a constant $\varkappa > 0$ called {\em the Lifshits exponent} which depends, as we will see, on the decay rate of the single-site twisting $w$.

From the above discussion we know already that the randomness in our model can have no effect on the spectral properties if the cross section $m$ is rotationally
invariant. Indeed, in this case $N$ does not decay exponentially near $\Sigma_{0}$. Instead there is a van Hove singularity
, i.e. a non smooth power-like decay, instead of a Lifshits tail (see e.g. \cite{cdv} and the references cited there for a general discussion of the van Hove singularities). Thus, to obtain Lifshits tails we have to assume that $\mathcal{T}\not=0$.

Moreover, for our method of proof, we also need the assumption that the diameter of $m$ is small enough (see \eqref{d4} and \eqref{d6}).

Then our main results may be summarized in the following somewhat informal manner:

\begin{theorem} \label{main}
Suppose that the cross-section $m$ is not rotationally symmetric, and that the waveguide is thin enough.
Assume moreover that the random twisting $\dot{\theta}$ is defined as in \eqref{mfo1}, and the single-site twisting $w$  obeys the decay condition \eqref{w1} with $\alpha > 1$.\\
  {\rm (i)} If $\alpha \geq 2$, then \eqref{m50} holds true with Lifshits exponent $\varkappa =\frac{1}{2}$.\\
  {\rm (ii)} If $w(s) \sim |s|^{-\alpha}$ as $|s| \to \infty$, with $\alpha \in (1,2)$, then $\varkappa = \frac{1}{2(\alpha-1)}$.
 \end{theorem}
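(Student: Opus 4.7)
\emph{Step 1: Reduction to a 1D effective operator.} The strategy is to reduce the 3D random problem to an effective one-dimensional random Schr\"odinger problem and then apply, in each of the two decay regimes, the classical Lifshits-tails machinery of Pastur--Figotin and Kirsch--Martinelli. I begin by undoing the twist with the standard unitary $U:L^{2}(\domth)\to L^{2}(\dom)$, which yields
\[
U\,{\mathcal H}_\theta\,U^{*} \;=\; -\Delta_{\perp}-\partial_{x_{3}}^{2}\;-\;\dth'(x_{3})\,\partial_{\tau}\;-\;2\dth(x_{3})\,\partial_{x_{3}}\partial_{\tau}\;-\;\dth(x_{3})^{2}\,\partial_{\tau}^{2},
\]
with $\partial_{\tau}:=x_{1}\partial_{x_{2}}-x_{2}\partial_{x_{1}}$. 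Splitting $L^{2}(\dom)=(\mathbb C\,\varphi_{1})\otimes L^{2}(\re)\oplus P_{1}^{\perp}L^{2}(\dom)$, a Feshbach--Schur reduction onto the ground-state fibre annihilates the first-order cross terms (because $\langle\varphi_{1},\partial_{\tau}\varphi_{1}\rangle_{L^{2}(\cs)}=0$ by antisymmetry of $\partial_\tau$ and reality of $\varphi_1$) and converts the quadratic term into $\T^{2}\dth^{2}$. The resulting 1D effective operator is
\[
h_{\omega}\;:=\;\mu_{1}-\partial_{x_{3}}^{2}+\T^{2}\,\dth(x_{3})^{2}+R_{\omega}\qquad\text{on }L^{2}(\re),
\]
with $R_{\omega}$ bounded by the inverse of the transverse spectral gap $\mu_{2}-\mu_{1}$. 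The ``thin waveguide'' hypothesis is precisely what forces this gap to dominate all relevant energy scales, so that $R_{\omega}$ is subleading on the Lifshits scale and \eqref{m50} for $N$ reduces to the same statement for the IDS of $-\partial_{x_{3}}^{2}+\T^{2}\dth^{2}$.

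\emph{Step 2: Lower bound.} To obtain $N(\mu_{1}+E)\gtrsim \exp(-cE^{-\varkappa})$ I would use the variational Kirsch--Martinelli argument. Consider the event $\mathcal A_{L}:=\{|\lambda_{k}(\omega)|\leq \delta\ \text{for all}\ |k|\leq 2L\}$; then $\mathbb P(\mathcal A_{L})\geq p_{0}(\delta)^{4L+1}$ where $p_{0}(\delta):=P_{0}([-\delta,\delta])>0$, and \eqref{w1} gives
\[
|\dth(x_{3})|\;\lesssim\;\delta\;+\;L^{1-\alpha}\qquad\text{for }x_{3}\in[-L,L].
\]
In case (i) ($\alpha\geq 2$) take $\delta\sim\sqrt E$ and $L\sim E^{-1/2}$, so that $V_{\omega}\lesssim E$ on $[-L,L]$; in case (ii) the tail term binds and forces the larger choice $L\sim E^{-1/(2(\alpha-1))}$. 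Testing $h_{\omega}$ against $\varphi_{1}(x_{\perp})\chi_{L}(x_{3})$, with $\chi_{L}$ the first Dirichlet eigenfunction of $[-L,L]$, then yields on $\mathcal A_{L}$ an eigenvalue of $h_{\omega}$ below $\mu_{1}+2E$; combined with the standard counting bound $N(\mu_{1}+E)\gtrsim L^{-1}\mathbb P(\mathcal A_{L})$ this returns the announced Lifshits exponent in each case (the logarithmic loss from $p_{0}(\delta)$ is absorbed by the outer $\ln|\ln\cdot|$).

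\emph{Step 3: Upper bound.} For the reverse inequality I would use Neumann bracketing,
\[
N(\mu_{1}+E)\;\leq\;(2L_{E})^{-1}\,\mathbb P\!\Bigl(E_{1}\!\bigl(h_{\omega}^{\rm Neu}\!\restriction_{[-L_{E},L_{E}]}\bigr)\leq \mu_{1}+2E\Bigr),
\]
with $L_{E}$ the scale identified in Step~2. Temple's inequality, applied to the Neumann ground state $\phi_{0}\equiv(2L_{E})^{-1/2}$ with gap $(\pi/(2L_{E}))^{2}$, reduces the event $\{E_{1}\leq \mu_{1}+2E\}$ to an upper bound of order $EL_{E}$ on $\int_{-L_{E}}^{L_{E}}V_{\omega}$. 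A large-deviation estimate for the quadratic form $\int\dth^{2}$ in the i.i.d.\ $\lambda_{k}$, supplemented in case (ii) by the deterministic minorant $\dth(x_{3})^{2}\gtrsim L_{E}^{2(1-\alpha)}$ arising from the long-range tails of $w$, then yields the matching probability $\exp(-cL_{E})$.

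\emph{Main obstacle.} The principal difficulty is that $V_{\omega}=\T^{2}\dth^{2}$ is \emph{quadratic} rather than linear in the disorder $\{\lambda_{k}\}$: the alloy-type structure used in the textbook proofs is broken, and both the small-ball probability of Step~2 and the large-deviation estimate of Step~3 have to be carried out for a genuine quadratic form in the $\lambda_{k}$. Case (ii) adds a further twist, as the power-law tails of $w$ inject into $V_{\omega}$ a deterministic minorant of size $L^{2(1-\alpha)}$ that dominates the kinetic gap $L^{-2}$ and dictates the anomalous binding length $L\sim E^{-1/(2(\alpha-1))}$ rather than the naive $L\sim E^{-1/2}$. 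A subsidiary technical issue is controlling the Feshbach remainder $R_{\omega}$ uniformly on boxes whose length grows as $E\downarrow 0$; this is precisely where the smallness-of-diameter assumption \eqref{d4}--\eqref{d6} enters.
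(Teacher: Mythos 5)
Your overall architecture --- project onto the transverse ground state $\varphi_1$, obtain an effective 1D Schr\"odinger operator with potential $\T^2\dth^2$, then run Lifshits-tail asymptotics at the two scales $L\sim E^{-1/2}$ and $L\sim E^{-1/(2(\alpha-1))}$ --- is the same as the paper's, and your identification of the quadratic dependence on the disorder as the central difficulty is accurate. But there are two genuine gaps. First, the reduction in your Step 1 is not correct as stated: the orthogonality $\langle\varphi_1,\partial_\tau\varphi_1\rangle_{{\rm L}^2(\cs)}=0$ kills only the \emph{diagonal} first-order term on the ground-state fibre. The off-diagonal couplings between $u_1=\varphi_1\otimes f$ and its orthogonal complement survive (see the cross term $2{\rm Re}\int(\gamma^2\partial_\tau u_1+2\gamma\partial_3 u_1+\dot\gamma u_1)\overline{\partial_\tau u_2}\,dx$ in \eqref{d9}), and when they are absorbed by Cauchy--Schwarz into the spectral gap $\mu_2-\mu_1$ they leave behind a \emph{negative} correction $-\eps\,\dot\gamma^2$ in the effective 1D potential, cf.\ \eqref{f1} and the upper bound \eqref{d5}. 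Your ``subleading remainder $R_\omega$'' is therefore not subleading on the Lifshits scale: it changes the effective potential into the sign-indefinite $\T^2\gamma^2-\eps\dot\gamma^2$, which is precisely why the paper must either invoke the non-monotonous alloy-type results of Klopp--Nakamura and Shen, or impose the logarithmic-derivative bound \eqref{m63} to trade $\dot\gamma^2$ back for $\gamma^2$ and recover \eqref{d7} with $\eps=0$. A Feshbach--Schur reduction with an energy-dependent, nonlocal remainder controlled ``uniformly on growing boxes'' is exactly what the paper's elementary two-sided quadratic-form bracketing (Theorem \ref{dt1}) is designed to avoid.

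Second, your Steps 2 and 3 do not actually prove the Lifshits asymptotics for the 1D operator: the Temple-plus-large-deviations route you sketch is the textbook \emph{monotone} alloy-type argument, and you yourself note that the quadratic (hence non-monotone and sign-indefinite, once $\eps>0$) dependence on $\{\lambda_k\}$ breaks it. Naming the obstacle is not the same as overcoming it. The paper does not overcome it internally either --- it imports the required asymptotics for $-d^2/ds^2+\T^2\gamma^2-\eps\dot\gamma^2$ from the companion paper \cite{kr} (Proposition \ref{fp1}, covering both $\alpha\ge 2$ and $1<\alpha<2$) and from \cite{klonak,shen} --- but a self-contained proof would have to supply the small-ball estimate for the quadratic form $\sum_k\lambda_k w(\cdot-k)$ and, in case (ii), the lower bound \eqref{f3}--\eqref{f4} ensuring the one-signed long-range tail actually produces the deterministic minorant of order $L^{2(1-\alpha)}$ you invoke; without the sign conditions that minorant simply fails.
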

 In the following sections we will describe, among other things, the  details of our assumptions on the randomly twisted waveguide. Having provided the reader with these technicalities,
 we state in Section 4 our Theorems \ref{ft1} (i), \ref{th1}, and \ref{fth1} which could be considered as the rigorous versions of the first part of Theorem \ref{main} dealing with rapidly decaying $w$. Similarly, Theorem \ref{ft1} (ii) is the precise version of Theorem \ref{main} (ii) concerning single site-twisting $w$ of a slow decay. \\

    Let us say a few more words about the organization of the article. In the next section we give precise definitions of fundamental quantities and prove some basic properties of our models. In section 3 we estimate $N(\Sigma_0 + E)$ with small $E>0$ in terms of the integrated density of states for suitable 1D Schr\"odinger operators $h_{\dot{\theta}, \eps}$ (see \eqref{f1} below) whose potential depends on the random twisting $\dot{\theta}$ and on the real parameter $\eps$. In Section 4, we formulate and prove our main results on the Lifshits tails for $N$, applying the estimates obtained in Section 3, as well as certain results on the Lifshits tails for the operator $h_{\dot{\theta}, \eps}$. Some of these necessary results turned out to be available in the literature (see
    \cite{klonak, shen}) and some of them are borrowed from our companion paper \cite{kr} where Lifshits tails for Schr\"odinger operators with squared Anderson-type potentials are investigated in any dimension $d \geq 1$.

\section{Definitions and preliminary results}
Let ${\mathcal H}_\theta$ be the self-adjoint operator generated in ${\rm L}^2(\domth)$ by the closed quadratic form
$$
{\mathcal Q}_\theta[u] : = \int_{\domth} |\nabla u|^2 dx, \quad u \in {\rm H}^1_0(\domth),
$$
where, as usual, ${\rm H}^1_0(\domth)$ is the closure of $C^\infty_0(\domth)$ in the first-order Sobolev space ${\rm H}^1(\domth)$.
Introduce the quadratic form
$$
Q_{\dth}[u] : = \int_{\dom} \left( |\nabla_t u|^2 + |\dth \partial_\tau u + \partial_3 u|^2\right) dx, \quad u \in {\rm H}^1_0(\dom),
$$
where $\nabla_t : = (\partial_1, \partial_2)$, and $\partial_\tau : = x_1 \partial_2 - x_2 \partial_1$. Let $H_{\dth}$ be the self-adjoint operator generated in ${\rm L}^2(\dom)$ by the closed quadratic form $Q_{\dth}$. Define the unitary operator $U_\theta : {\rm L}^2(\domth) \to {\rm L}^2(\dom)$ by
$$
\left(U_\theta u \right)(x) : = u\left({\mathcal R}_\theta(x_3)\,x\right), \quad x \in \dom, \quad u \in {\rm L}^2(\domth).
$$
Then $H_{\dth}  = U_\theta {\mathcal H}_\theta U_\theta^{-1}$.\\

  If $\cs \subset \rd$ is a bounded domain with  boundary $\partial \cs \in C^2$, and  $\theta \in C^2(\re ; \re)$ has bounded first and second derivatives, then
    \bel{12}
H_{\dth} = -\partial_1^2 - \partial_2^2  -(\dth \partial_\tau  + \partial_3)^2, \quad {\rm Dom} (H_{\dth})  = {\rm H}^2(\dom) \cap {\rm H}_0^1(\dom),
    \ee
    (see \cite[Corollary 2.2]{bkr}).

In this article we will consider the operator $H_\gamma$ with random Anderson-type twisting $\dth = \gamma$ (see \eqref{ujf3} below).
Let $(\Omega, {\mathcal F}, {\mathbb P})$ be a probability space. Assume that $\lambda_k(\omega)$, $k \in \ze$, $\omega \in \Omega$, are independent, identically distributed random variables.
Set
$$
\lambda^- : = \essinf_{\omega \in \Omega}\,\lambda_0(\omega), \quad \lambda^+ : = \esssup_{\omega \in \Omega}\,\lambda_0(\omega).
$$
Throughout the article we assume that
    \bel{ujf1}
    -\infty < \lambda^- <  \lambda^+ < \infty.
    \ee
    Further, introduce the {\em single-site twisting} $w \in C(\re; \re)$ which is supposed to satisfy
    \bel{ujf2}
    |w(s)| \leq C (1 + |s|)^{-\alpha}, \quad s \in \re,
    \ee
    with some constants $C \in (0,\infty)$, and $\alpha \in (1,\infty)$. Moreover, we assume that
    \bel{f2}
    w \not \equiv 0 \quad \mbox{on} \quad \re.
    \ee

    Introduce the random twisting
    \bel{ujf3}
\gamma(s; \omega)~=~\dot{\theta}(s,\omega)  = \sum_{k \in \ze}\lambda_k(\omega) w(s-k), \quad s \in \re, \quad \omega \in \Omega.
    \ee
    Then $\gamma$ is a $\ze$-ergodic random field, and the operator $H_\gamma$, self-adjoint in ${\rm L}^2(\dom)$, is ergodic with respect to the translations $T_k$,  defined by
    $$
    (T_k u)(x_t,x_3) = u(x_t,x_3-k), \quad k \in \ze, \quad (x_t, x_3) \in \dom, \quad u \in {\rm L}^2(\dom).
    $$
 By the general theory of ergodic operators (see e.g. \cite[Section 4]{kirsch}), there exists a closed non-random subset $\Sigma$ of $\re$ such that almost surely
    \bel{4}
    \sigma(H_{\gamma}) = \Sigma.
    \ee
    Let us introduce {\em the integrated density of states} (IDS) of the operator $H_\gamma$.
    For a finite $\ell > 0$, set $\doml: = \cs \times (-\ell/2,\ell/2)$, and define the operator $H_{\gamma,\ell}$ as the self-adjoint operator generated in ${\rm L}^2(\doml)$ by the closed quadratic form
$$
Q_{\gamma,\ell}[u] = \int_{\doml} \left(|\nabla_t u|^2 + |\gamma(x_3; \omega) \partial_\tau u + \partial_3 u|^2\right)dx, \quad u \in {\rm H}_0^1(\doml).
$$
Evidently, the spectrum of $H_{\gamma,\ell}$ is purely discrete.
    We will say that the non-decreasing left-continuous function $N=N_\gamma: \re \to [0,\infty)$ is an IDS for the operator $H_\gamma$ if almost surely we have
    \bel{ujf21}
    \lim_{\ell \to \infty} \ell^{-1} \, {\rm Tr} \, \one_{(-\infty, E)}(H_{\gamma,\ell}) = N_\gamma(E)
    \ee
    at the points of continuity $E \in \re$ of $N_\gamma$.
    Arguing as in \cite[Theorem 6, Section 7]{kirsch} or \cite{hlmw1}, it is easy to show that there exists an IDS $N_\gamma$ for $H_\gamma$, and ${\rm supp}\,dN_\gamma = \Sigma$ (see \eqref{4}).\\

 \section{Estimates of $N_\gamma$ in terms of the IDS for 1D random Schr\"odinger operators}
\label{s4}

In this section we show that if $\essinf_{\omega \in \Omega} \lambda_0(\omega)^2 = 0$, then almost surely $\inf \, \sigma(H_\gamma)$ coincides with  $\mu_1$, the lowest eigenvalue of the transversal Dirichlet Laplacian, and obtain suitable two-sided estimates of $N(\mu_1 + E)$ for sufficiently small $E>0$, in terms of the IDS for appropriate 1D random Schr\"odinger operators $h_{\gamma, \eps}$ (see \eqref{f1} below). \\

Let $\left\{\mu_j\right\}_{j \in \N}$ be the non-decreasing sequence of the eigenvalues of the transversal Dirichlet Laplacian $-\Delta_t^D$, generated in ${\rm L}^2(m)$ by the closed quadratic form
$$
\int_{\cs} |\nabla_t u|^2 dx_t, \quad u \in {\rm H}^1_0(\cs),
$$
with $x_t : = (x_1,x_2)$. We have
    \bel{ujf4}
    0 < \mu_1 < \mu_2.
    \ee
Let $\left\{\varphi_j\right\}_{j \in \N}$  be an orthonormal basis in ${\rm L^2}(\cs)$ consisting of real-valued eigenfunctions of $-\Delta_t^D$ which satisfy
$$
-\Delta_t^D\, \varphi_j = \mu_j \varphi_j, \quad j \in \N.
$$
It is well known that $\varphi_1$ could be chosen so that
$$
\varphi_1(x_t) > 0, \quad x_t  \in \cs.
$$
Set
    \bel{d1}
    \T : = \nft.
    \ee

%
%
%
%

    Arguing as in the proof of \cite[Proposition 2.2]{bkrs}, we can show that if $\partial \cs \in C^2$, then the inequality
    \bel{m1}
    \T \neq 0
    \ee
holds true if and only if $\cs$ is not rotationally symmetric with respect to the origin.
On the other hand, if  $\cs$ is any bounded rotationally symmetric domain, then $\T = 0$. Moreover, in this case
the operator $H_{\dth}$ is unitarily equivalent to $H_0$, the spectrum $\sigma(H_{\dth}) = [\mu_1, \infty)$ is absolutely continuous, the IDS $N_{\dth} = N_0$, independent of $\dth$, is well defined by analogy with \eqref{ujf21},  and we have
    \bel{m51}
N_0(E) = \frac{1}{\pi} \sum_{j=1}^\infty (E-\mu_j)_+^{1/2}, \quad E \in \re.
    \ee
In particular,
    \bel{m2}
    N_0(\mu_1 + E) = \frac{1}{\pi} E_+^{1/2}, \quad E \in (-\infty, \mu_2-\mu_1).
    \ee

Assume \eqref{ujf1}, \eqref{ujf2}, and
    \bel{d3}
    w \in C^1(\re;\re), \quad |\dot{w}(s)| \leq C (1+ |s|)^{-\alpha}, \quad s \in \re.
    \ee
For $\eps \in \re$ introduce the operator $h_{\gamma, \eps}$ as the self-adjoint operator generated in ${\rm L}^2(\re)$
by the closed quadratic form
$$
q_{\gamma, \eps}[f] : = \int_\re \left(|\dot{f}|^2 + \left(\Td \gamma(s;\omega)^2 - \eps \dot{\gamma}(s;\omega)^2\right)|f|^2\right) ds, \quad f \in {\rm H}^1(\re).
$$
{\em Remark}: If $\eps = 0$, then we can omit assumption \eqref{d3} in the definition of the operator $h_{\gamma, \eps}$. \\

Thus,
    \bel{f1}
h_{\gamma, \eps}  = - \frac{d^2}{ds^2} + \Td \gamma^2 - \eps \dot{\gamma}^2
    \ee
is a 1D Schr\"odinger operator with random potential $\Td \gamma(s;\omega)^2 - \eps \dot{\gamma}(s;\omega)^2$, $s \in \re$, $\omega \in \Omega$. This operator is $\ze$-ergodic, and its spectrum is almost surely independent of $\omega \in \Omega$.
Introduce the IDS for the operator $h_{\gamma, \eps}$ as the non-decreasing function $\nu_{\gamma, \eps} : \re \to \re$ which almost surely satisfies
    \bel{ujf20}
\lim_{\ell \to \infty} \ell^{-1} \, {\rm Tr} \, \one_{(-\infty, E)}(h_{\gamma,\eps, \ell}) = \nu_{\gamma, \eps}(E), \quad E \in \re,
    \ee
    $h_{\gamma,\eps, \ell}$ being the self-adjoint operator generated in ${\rm L}^2(-\ell/2,\ell/2)$ by the closed quadratic form
    \bel{d2}
q_{\gamma, \eps, \ell}[f] : = \int_{-\ell/2}^{\ell/2} \left(|\dot{f}|^2 + \left(\Td \gamma(s;\omega)^2 - \eps \dot{\gamma}(s;\omega)^2\right)|f|^2\right) ds, \quad f \in {\rm H}^1_0(-\ell/2, \ell/2).
    \ee
The IDS $\nu_{\gamma, \eps}$ exists and is continuous (see \cite[Theorem 3.2]{pf}). Moreover, in the definition \eqref{ujf20} of $\nu_{\gamma, \eps}$, we can replace the operator $h_{\gamma,\eps, \ell}$ equipped with Dirichlet boundary conditions by the operator generated by the quadratic form \eqref{d2} with domain
${\rm H}^1(-\ell/2, \ell/2)$,  corresponding to Neumann boundary conditions.
 Further, it follows from \eqref{ujf1} that
$$
\tilde{\lambda}^+ : = \esssup_{\omega \in \Omega}  \lambda_0(\omega)^2  > 0.
$$
In what follows, we assume that
    \bel{ujf8}
    \tilde{\lambda}^-: = \essinf_{\omega \in \Omega} \lambda_0(\omega)^2 = 0.
    \ee
    Note that \eqref{ujf8} implies that almost surely
    \bel{ujf8a}
      \sigma(h_{\gamma, 0}) = [0,\infty)
    \ee
    (see \cite{kma}).
	
    \begin{pr} \label{p1}
    Assume \eqref{ujf1}, \eqref{ujf2}, and \eqref{ujf8}.
    Then almost surely we have
    \bel{ujf9}
    \sigma(H_{\gamma}) = [\mu_1,\infty).
    \ee
    \end{pr}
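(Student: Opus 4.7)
The plan is to prove the two inclusions $\sigma(H_\gamma) \subseteq [\mu_1,\infty)$ and $[\mu_1,\infty) \subseteq \sigma(H_\gamma)$ separately. The first is an immediate consequence of the quadratic-form bound: for every $u \in {\rm H}^1_0(\dom)$,
\beq
Q_\gamma[u] \geq \int_\dom |\nabla_t u|^2\,dx \geq \mu_1 \|u\|^2,
\eeq
by the fibrewise Poincar\'e inequality $\int_\cs |\nabla_t f|^2 \geq \mu_1 \int_\cs |f|^2$ for $f \in {\rm H}^1_0(\cs)$ (cf.\ \eqref{ujf4}). Hence $H_\gamma \geq \mu_1$ almost surely.

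For the opposite inclusion, I construct Weyl sequences exploiting that $\gamma$ can be made uniformly small on arbitrarily long intervals. The hypothesis $\tilde{\lambda}^- = 0$ in \eqref{ujf8} is equivalent to ${\mathbb P}(|\lambda_0| < \eta) > 0$ for every $\eta > 0$. By independence of the $\lambda_k$ and the second Borel--Cantelli lemma, for every $\eta > 0$ and every $L \in \N$ there almost surely exist infinitely many centres $n \in \ze$ such that $|\lambda_j(\omega)| < \eta$ for all $|j - n| \leq 2L$. Setting $I_{n,L} := [n-L, n+L]$ and splitting $\gamma(s;\omega) = \sum_j \lambda_j(\omega) w(s-j)$ into close ($|j-n|\leq 2L$) and distant ($|j-n|>2L$) contributions, \eqref{ujf2} together with $\alpha > 1$ yields the pointwise bound
\beq
\sup_{s \in I_{n,L}} |\gamma(s;\omega)| \leq C_1 \eta + C_2 L^{-(\alpha-1)},
\eeq
which can be made arbitrarily small by first choosing $L$ large and then $\eta$ small.

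Now fix $E = \mu_1 + k^2 \geq \mu_1$, pick $\chi \in C^\infty_0(-1,1)$ with $\|\chi\|_{{\rm L}^2} = 1$, and set $\chi_L(s) := L^{-1/2}\chi(s/L)$, so $\|\chi_L\|_{{\rm L}^2} = 1$ and $\|\chi_L^{(j)}\|_{{\rm L}^2} = O(L^{-j})$. For each $L$, choose $\eta_L \to 0$ and, by the previous paragraph, a centre $n_L \in \ze$ satisfying the small-twist condition at scale $(\eta_L, L)$. The trial function
\beq
u_L(x) := \varphi_1(x_t)\,e^{ikx_3}\,\chi_L(x_3 - n_L)
\eeq
is an ${\rm L}^2$-unit vector in ${\rm H}^1_0(\dom)$ supported where $|\gamma|$ (and $|\dot\gamma|$, by the analogous estimate) is small. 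A direct computation via \eqref{12}, using $-\Delta_t\varphi_1 = \mu_1\varphi_1$ and the expansion of $(\gamma \partial_\tau + \partial_3)^2$, shows that $(H_\gamma - E)u_L$ consists of kinetic remainders of size $O(L^{-1})$ (derivatives falling on $\chi_L$) plus $\gamma$- and $\dot\gamma$-dependent remainders of size $O\bigl(\sup_{I_{n_L,L}}(|\gamma| + |\dot\gamma|)\bigr)$. Sending $L \to \infty$ with $\eta_L \to 0$ suitably yields a Weyl sequence for $E$, whence $E \in \sigma(H_\gamma)$.

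The main technical obstacle is justifying this differential-operator computation under the weak hypotheses of Proposition \ref{p1}, which imposes only \eqref{ujf1}, \eqref{ujf2}, \eqref{ujf8} --- in particular neither $\partial\cs \in C^2$ nor \eqref{d3} is in force, so \eqref{12} is not directly available and $\dot\gamma$ need not make classical sense. The clean fix is to work entirely at the form level: replace Weyl's criterion by its form formulation $|Q_\gamma[u_L, v] - E\langle u_L, v\rangle| = o(1)\,(Q_\gamma[v] + \|v\|^2)^{1/2}$ for all $v \in {\rm H}^1_0(\dom)$, and bound each term using only pointwise control of $\gamma$, since $\dot\gamma$ does not appear in $Q_\gamma$.
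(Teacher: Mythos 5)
Your proposal is correct and follows essentially the same route as the paper: the lower bound via $Q_\gamma[u]\geq\int_M|\nabla_t u|^2\,dx\geq\mu_1\|u\|^2$, and the opposite inclusion via trial functions $\varphi_1\otimes f_n$ with $f_n$ an approximate generalized eigenfunction of $-d^2/ds^2$ localized where $\|\gamma\|_{L^\infty}$ is small, fed into the quadratic-form version of Weyl's criterion. The only difference is cosmetic: you spell out the Borel--Cantelli construction of the low-twisting regions, which the paper delegates to the argument of Kirsch--Martinelli, and your closing remark about avoiding \eqref{12} and $\dot\gamma$ by staying at the form level is exactly the paper's resolution.
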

    \begin{proof}
    We have
    \bel{m60}
    \inf\,\sigma(H_{\gamma}) =
   \inf_{0 \neq u \in {\rm H}^1_0(M)} \frac{Q_{\gamma}[u]}{\| u \|^2_{{\rm L}^2(M)}}.
    \ee
    Since
    $$
    Q_{\gamma}[u] \geq \int_M |\nabla_t u|^2 \,dx, \quad u \in {\rm H}_0^1(\dom),
    $$
   it follows from \eqref{m60} and
   $$
   \mu_1 = \inf_{0 \neq u \in {\rm H}^1_0(M)} \frac{\int_M |\nabla_t u|^2 \,dx}{\int_M |u|^2\,dx},
   $$
   that
    \bel{ujf10}
    \inf \sigma(H_{\gamma}) \geq \mu_1.
    \ee

Let us now prove the almost sure inclusion
\begin{equation}\label{opposite}
  \sigma(H_{\gamma}) \supset [\mu_1,\infty) .
\end{equation}
Fix $E \geq 0$. Arguing along the lines of the proof of \eqref{ujf8a} in \cite{kma}, we can construct a
sequence $\{f_n\}_{n\in\N} \subset C_0^\infty(\R)$,
normalized to one in $\mathrm{L}^2(\R)$,
such that, almost surely
\begin{equation}\label{Weyl1}
  \|-\ddot{f}_n- E f_n\|_{\mathrm{L}^2(\R)} \xrightarrow[n\to\infty]{} 0
  \qquad \mbox{and} \qquad
  \|\gamma\|_{\mathrm{L}^\infty(\supp f_n)} \xrightarrow[n\to\infty]{} 0
  .
\end{equation}
Notice that, by writing
$
  \|\dot{f}_n\|_{\mathrm{L}^2(\R)}^2
  = -(\ddot{f}_n,f_n)_{\mathrm{L}^2(\R)}
  \leq \|\ddot{f}_n\|_{\mathrm{L}^2(\R)}
$,
it follows from the first limit in~\eqref{Weyl1}
that the sequence $\{\dot{f}_n\}_{n\in\N}$
is almost surely bounded in $\mathrm{L}^2(\R)$.
The sequence $\{u_n\}_{n\in\N} \subset \mathrm{H}^1(M)$
defined by
$$
    u_n := \varphi_1 \otimes f_n
$$
is normalized to one in $\mathrm{L}^2(M)$.
By the Weyl criterion adapted to quadratic forms (see~\cite[Theorem~5]{KL}),
the desired inclusion~\eqref{opposite}
will hold  if we show that, almost surely,
\begin{equation}\label{Weyl2}
  \sup_{0 \neq \phi \in {\rm H}^1_0(M)}
  \frac{|Q_\gamma(u_n,\phi)-(\mu_1 + E) (u_n,\phi)_{\mathrm{L}^2(M)}|}
  {\|\phi\|_{\mathrm{H}^1(M)}}
  \xrightarrow[n\to\infty]{} 0 ,
\end{equation}
where $Q_\gamma(\cdot,\cdot)$ is the sesquilinear form generated by the quadratic form $Q_\gamma[u]$, $u \in {\rm H}_0^1(M)$, and $(\cdot ,\cdot)_{{\rm L}^2(M)}$ is the scalar product in ${\rm L}^2(M)$. \\
Integrating by parts, using the normalizations of~$f_n$ and~$\varphi_1$,
and applying the Cauchy-Schwarz inequality,
we get
\begin{align} \label{may}
  |Q_\gamma(u_n,\phi)-(\mu_1 + E) (u_n,\phi)_{\mathrm{L}^2(M)}|
  \ \leq \ & \|\phi\|_{\mathrm{L}^2(M)} \, \|-\ddot{f}_n - E f_n\|_{\mathrm{L}^2(\R)}
  \\
  & + \|\partial_3\phi\|_{\mathrm{L}^2(M)} \,
  \|\gamma\|_{\mathrm{L}^\infty(\supp f_n)} \,
  	\mathcal{T} \nonumber
  \\
  & + \|\partial_\tau\phi\|_{\mathrm{L}^2(M)} \,
  \|\gamma\|_{\mathrm{L}^\infty(\supp f_n)} \,
  \|\dot{f}_n\|_{\mathrm{L}^2(\re)} \nonumber
  \\
  & + \|\partial_\tau\phi\|_{\mathrm{L}^2(M)} \,
  \|\gamma^2\|_{\mathrm{L}^\infty(\supp f_n)} \,
  \mathcal{T} \nonumber
  .
\end{align}
Thus, \eqref{may} and \eqref{Weyl1} imply \eqref{Weyl2}, and hence \eqref{opposite}.\\
 Now \eqref{ujf9} follows from \eqref{ujf10} and \eqref{opposite}.
\end{proof}

  Further, we need several notations which will allow us to formulate certain assumptions of geometric nature.
  Assume \eqref{ujf1}, \eqref{ujf2}, and set
  $$
  D_1 : =  \esssup_{\omega \in \Omega} \sup_{s \in \re} (5\gamma(s;\omega)^2 +1).
  $$
  Then $D_1 < \infty$.\\
   Further,
  assume \eqref{ujf1}, \eqref{ujf2}, \eqref{d3}, and \eqref{m1}. Suppose in addition that the logarithmic derivative $\dot{\gamma}/\gamma$ is well defined
  and
  \bel{m63}
  \esssup_{\omega \in \Omega} \sup_{s \in \re} \left| \frac{\dot{\gamma}(s;\omega)}{\gamma(s,\omega)}\right| < \infty.
  \ee
   Set
  $$
  D_2 : =  \esssup_{\omega \in \Omega} \sup_{s \in \re} \left(6\gamma(s;\omega)^2 + \frac{2\dot{\gamma}(s;\omega)^2}{\Td \gamma(s;\omega)^2}\right).
  $$
  Then $D_2 < \infty$. \\

  {\em Remark}: Assumption \eqref{m63} holds true if $w$ does not vanish at any $s \in \re$ and admits a regular power-like decay at infinity, but it is false if $w$ has a compact support.\\

  Finally, put
  $$
  a: = \sup_{x_t \in \cs} |x_t|.
  $$

  \begin{theorem} \label{dt1}
  Assume \eqref{ujf1} and \eqref{ujf2}. \\
  {\rm (i)} We have
  \bel{d8}
  \nu_{\gamma, 0}(E) \leq N_\gamma(\mu_1 + E), \quad E \in \re.
  \ee
  {\rm (ii)} Let $\delta_0 \in (0,1)$. Suppose in addition that \eqref{d3} holds true, and
  \bel{d4}
  a^2 \left(1 - \frac{\mu_1}{\mu_2}\right)^{-1}  D_1  < \delta_0.
  \ee
   Then we have
   \bel{d5}
    N_\gamma(\mu_1 + E) \leq \nu_{\gamma, \delta/(1-\delta)}((1-\delta)^{-1}E)
   \ee
   for any $\delta \in \left(a^2 \left(1 - \frac{\mu_1}{\mu_2}\right)^{-1}  D_1, \delta_0\right)$ and $E \in (0,\mu_2(1-\delta^{-1} D_1 a^2)-\mu_1)$.\\
   {\rm (iii)} Suppose in addition that \eqref{d3}, \eqref{m1}, and \eqref{m63} hold true, and
  \bel{d6}
  a^2 \left(1 - \frac{\mu_1}{\mu_2}\right)^{-1} D_2  < 1.
  \ee
   Then we have
   \bel{d7}
    N_\gamma(\mu_1 + E) \leq \nu_{\gamma, 0}((1-\delta)^{-1}E)
   \ee
   for any $\delta \in \left(a^2 \left(1 - \frac{\mu_1}{\mu_2}\right)^{-1}  D_2, 1\right)$ and $E \in (0,\mu_2(1-\delta^{-1} D_2 a^2)-\mu_1)$.
  \end{theorem}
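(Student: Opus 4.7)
The plan is to bracket the quadratic form $Q_\gamma$ between 1D forms $q_{\gamma,\eps}$ via a fibered reduction to the ground state of the transverse Dirichlet Laplacian; in all three parts the IDS inequalities then follow from the min--max principle on the boxes $M_\ell$ together with the $\ell\to\infty$ limit. The key geometric input for the upper bounds~(ii)--(iii) is the pointwise estimate $|\partial_\tau u|\le a\,|\nabla_t u|$ coming from $\partial_\tau=x_1\partial_2-x_2\partial_1$ and $|x_t|\le a$ on~$m$.

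For~(i), I would test $Q_{\gamma,\ell}$ against $u=\varphi_1\otimes f$ with $f\in H^1_0(-\ell/2,\ell/2)$. Expanding $|\gamma\partial_\tau u+\partial_3 u|^2$ and integrating over $m$ first, using the eigenvalue relation $-\Delta_t^D\varphi_1=\mu_1\varphi_1$, the normalization $\|\varphi_1\|=1$, and the identity $\int_m\varphi_1\partial_\tau\varphi_1\,dx_t=\frac{1}{2}\int_m\partial_\tau(\varphi_1^2)\,dx_t=0$ (the divergence-theorem boundary term vanishes because $\varphi_1=0$ on $\partial m$), I obtain $Q_{\gamma,\ell}[\varphi_1\otimes f]=\mu_1\|f\|_{L^2(\re)}^2+q_{\gamma,0,\ell}[f]$. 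Testing against the first $n$ Dirichlet eigenfunctions of $h_{\gamma,0,\ell}$ then creates an $n$-dimensional subspace of $H^1_0(M_\ell)$ whose Rayleigh quotient is bounded by $\mu_1+E$; the min--max inequality for counting functions, divided by $\ell$ and passed to $\ell\to\infty$, gives~\eqref{d8}.

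For~(ii)--(iii) I would go in the opposite direction. For $u\in H^1_0(M_\ell)$ I perform the transverse decomposition $u=\varphi_1\,f+u_\perp$ with $f(x_3):=\int_m\varphi_1(x_t)u(x_t,x_3)\,dx_t$ and $\int_m\varphi_1\,u_\perp\,dx_t\equiv 0$; both $f$ and $u_\perp$ then satisfy Dirichlet conditions at $x_3=\pm\ell/2$. The gradient piece splits cleanly as $\mu_1\|f\|^2+\int_{M_\ell}|\nabla_t u_\perp|^2\,dx$, the latter bounded below by $\mu_2\|u_\perp\|_{L^2(M_\ell)}^2$ by the spectral gap. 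Expanding $|\gamma\partial_\tau u+\partial_3 u|^2$ in the decomposition and integrating in $x_t$ yields the diagonal pieces $\dot f^2+\mathcal{T}^2\gamma^2 f^2$ and $|\gamma\partial_\tau u_\perp+\partial_3 u_\perp|^2$, plus three nontrivial cross terms (the fourth vanishes because $\int_m\varphi_1 u_\perp\,dx_t\equiv 0$ in $x_3$). Two of these are controlled directly by Cauchy--Schwarz on $m$, using $|\partial_\tau u_\perp|\le a\,|\nabla_t u_\perp|$ and $\|\partial_\tau\varphi_1\|_{L^2(m)}=\mathcal{T}$. The remaining one, of the form $\int_\re\gamma f\int_m\partial_\tau\varphi_1\,\partial_3 u_\perp\,dx_t\,dx_3$, is handled by integration by parts in $x_3$ (the boundary contribution vanishes since $u_\perp=0$ at $x_3=\pm\ell/2$); this introduces an $\dot\gamma\,f$-factor which, after Young's inequality with parameter $\delta$, generates exactly the $-\frac{\delta}{1-\delta}\dot\gamma^2 f^2$ contribution built into $q_{\gamma,\delta/(1-\delta)}$. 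The thinness assumption~\eqref{d4} is engineered precisely so that the aggregate $u_\perp$-cost of all Young splittings is payable by the spectral-gap reservoir $(\mu_2-\mu_1)\|u_\perp\|^2$, leaving the form bound
\[
  Q_\gamma[u]-\mu_1\|u\|^2\ \geq\ (1-\delta)\,q_{\gamma,\delta/(1-\delta)}[f]
\]
on $H^1_0(M_\ell)$. A standard min--max argument, using that on the low-energy subspace $\|f\|_{L^2}^2\geq(1-E/(\mu_2-\mu_1))\|u\|_{L^2(M_\ell)}^2$ so that the projection $u\mapsto f$ is injective there, turns this into~\eqref{d5}. For~(iii), the additional hypothesis~\eqref{m63} allows me to avoid the $x_3$-integration by parts altogether: the factor $\partial_3 u_\perp$ is instead controlled directly by $\nabla_t u_\perp$ after rewriting $\dot\gamma=(\dot\gamma/\gamma)\gamma$, and the finite $\sup|\dot\gamma/\gamma|$ is absorbed into the enlarged constant $D_2$. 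The stricter thinness~\eqref{d6} now plays the role of~\eqref{d4}, no $-\eps\dot\gamma^2$ term survives in the 1D form, and~\eqref{d7} follows by the same min--max conversion.

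I expect the main obstacle to be the bookkeeping of the three cross terms in~(ii)--(iii): each must be distributed by Young's inequality so that the $u_\perp$-part is absorbed by the spectral-gap reservoir, while the $f$-parts combine into exactly $(1-\delta)\,q_{\gamma,\eps}[f]$ with the correct $\eps$. Making the constants $D_1$ and $D_2$ come out as in~\eqref{d4}--\eqref{d6}, and verifying the injectivity of the projection $u\mapsto f$ on the low-energy subspace (valid only for $E<\mu_2-\mu_1$, which is encoded in the upper endpoints of the intervals appearing in the theorem), is the computational heart of the argument.
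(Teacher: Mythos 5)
Your proposal follows essentially the same route as the paper: decompose $u=\varphi_1\otimes f+u_\perp$, expand the diagonal and cross terms, integrate by parts in $x_3$ to produce the $\dot\gamma$ factor, apply Young's inequality with parameter $\delta$, and absorb the $u_\perp$-cost via the spectral gap and the thinness condition, with part~(i) identical to the paper's argument. The only slight deviation is in~(iii): the paper does not avoid the $x_3$ integration by parts but keeps the same cross-term identity and instead redistributes the Young weights, using \eqref{m63} to bound the weight $\dot\gamma^2/(\mathcal{T}^2\gamma^2)$ placed on the $|\partial_\tau u_\perp|^2$ side --- which is exactly what makes the $\eps$-term disappear from the 1D form, as you correctly anticipate.
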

  {\em Remark}: If $\gamma$ is fixed and $D_1 < \infty$ (resp., $D_2 < \infty$), then \eqref{d4}  (resp., \eqref{d6}) holds true if $a$ is small enough. Note that it follows from the results of \cite{deo, ks} that the operator $H_\gamma - \mu_1$ converges in an appropriate sense to $h_{\gamma,0}$ as $a \downarrow 0$ .\\
  \begin{proof}[Proof of Theorem \ref{dt1}]
   If we restrict the quadratic form $Q_{\gamma, \ell}$ to functions of the form
    $$
    u_1 = \varphi_1 \otimes f, \quad f \in {\rm H}_0^1(-\ell/2,\ell/2),
    $$
    then
    \bel{m61}
    Q_{\gamma, \ell}[u_1] = q_{\gamma,0,\ell}[f] + \mu_1 \|f\|^2_{{\rm L}^2(-\ell/2,\ell/2)}, \quad \|u_1\|^2_{{\rm L}^2(\dom)} = \|f\|^2_{{\rm L}^2(\re)},
    \ee
    the quadratic form $q_{\gamma,\eps,\ell}$ being defined in \eqref{d2}.
    Hence, the mini-max principle implies
    \bel{ujf19}
    {\rm Tr}\,\one_{(-\infty,\mu_1 + E)}(H_{\gamma, \ell}) \geq {\rm Tr}\,\one_{(-\infty,E)}(h_{\gamma, 0, \ell}), \quad E \in \re.
    \ee
    Combining \eqref{ujf21}, \eqref{ujf20}, and \eqref{ujf19}, we get \eqref{d8}. \\
    Next, set
    $$
    {\mathcal D}_1 : = \left\{u_1 = \varphi_1 \otimes f \, | \, f \in {\rm H}^1_0(-\ell/2,\ell/2)\right\},
    $$
    $$
    {\mathcal D}_2 : = \left\{u_2 \in {\rm H}_0^1(\doml) \, | \, \int_{\doml} u_2(x) \overline{u_1(x)} dx = 0, \, \forall u_1 \in {\mathcal D}_1\right\}.
    $$
     Then, for $u = u_1 + u_2$ with $u_1 = \varphi_1 \otimes f \in {\mathcal D}_1$ and $u_2 \in {\mathcal D}_2$, we have
    $$
    \|u\|_{{\rm L}^2(\doml)}^2 = \|u_1 + u_2\|_{{\rm L}^2(\doml)}^2 = \|f\|^2_{{\rm L}^2(-\ell/2,\ell/2)} + \|u_2\|_{{\rm L}^2(\doml)}^2.
    $$
    Moreover, integrating by parts, we get
    $$
    Q_{\gamma, \ell}[u] = Q_{\gamma,\ell}[u_1+u_2] =
    $$
    $$
    Q_{\gamma,\ell}[u_1] +  Q_{\gamma,\ell}[u_2] + 2{\rm Re}\,\int_{\doml} \left(\gamma^2 \partial_\tau u_1 \overline{\partial_\tau u_2} +  \gamma \partial_3 u_1 \overline{\partial_\tau u_2}
     + \gamma \partial_\tau u_1 \overline{\partial_3 u_2}\right) dx =
     $$
    \bel{d9}
    Q_{\gamma,\ell}[u_1] +  Q_{\gamma,\ell}[u_2] + 2{\rm Re}\,\int_{\doml} \left(\gamma^2 \partial_\tau u_1  + 2 \gamma \partial_3 u_1
    + \dot{\gamma} u_1 \right)\, \overline{\partial_\tau u_2}\, dx.
     \ee
     Assume \eqref{d4} and pick $\delta \in  \left(a^2 \left(1 - \frac{\mu_1}{\mu_2}\right)^{-1}  D_1, \delta_0\right)$. We have
     $$
     2{\rm Re}\,\int_{\doml} \left(\gamma^2 \partial_\tau u_1  + 2 \gamma \partial_3 u_1
    + \dot{\gamma} u_1 \right)\, \overline{\partial_\tau u_2}\, dx \geq
     $$
     $$
     -\delta \int_{M_\ell} \left(\gamma^2 |\partial_\tau u_1|^2 + |\partial_3 u_1|^2 +  \dot{\gamma}^2 |u_1|^2\right)dx
     -\delta^{-1} \int_{M_\ell} (5 \gamma^2 + 1) |\partial_\tau u_2|^2 dx =
     $$
     \bel{a1}
     -\delta \int_{-\ell/2}^{\ell/2} \left(|\dot{f}|^2 + ({\mathcal T}^2 \gamma^2 + \dot{\gamma}^2)|f|^2\right)dx_3
     -\delta^{-1} \int_{M_\ell} (5 \gamma^2 + 1) |\partial_\tau u_2|^2 dx.
     \ee
     Then, \eqref{m61}, \eqref{d9}, and \eqref{a1}  easily imply
      \bel{d10}
      Q_{\gamma,\ell}[u] \geq
     (1-\delta) q_{\gamma, \delta/(1-\delta), \ell} [f] + \mu_1 \|f\|^2_{{\rm L}^2(-\ell/2,\ell/2)} + \tilde{Q}_{\gamma,\ell}[u_2]
     \ee
     where
     $$
     \tilde{Q}_{\gamma,\ell}[u_2] : =
     $$
     $$
     \int_{\doml} \left(|\nabla_t u_2|^2 - \delta^{-1} (5 \gamma^2 +1)|\partial_\tau u_2|^2 + |\gamma \partial_\tau u_2 + \partial_3 u_2|^2 \right) dx, \quad u_2 \in {\mathcal D}_2.
     $$
     Let $\tilde{H}_{\gamma, \ell}$ be the operator generated by  $ \tilde{Q}_{\gamma,\ell}$ in the Hilbert space ${\mathcal D}_1^\perp$, the orthogonal complement of  ${\mathcal D}_1$ in ${\rm L}^2(\doml)$. Then the mini-max principle and \eqref{d10} imply
     \bel{d20}
     {\rm Tr}\,\one_{(-\infty,\mu_1 + E)}(H_{\gamma, \ell}) \leq {\rm Tr}\,\one_{(-\infty,E)}((1-\delta) h_{\gamma, \delta/(1-\delta), \ell}) +
     {\rm Tr}\,\one_{(-\infty,\mu_1 + E)}(\tilde{H}_{\gamma, \ell}), \quad E \in \re.
     \ee
     Since $|\partial_\tau u_2| \leq |x_t| |\nabla_t u_2|$,
     we have
     \bel{d14}
     \tilde{Q}_{\gamma,\ell}[u_2] \geq \mu_2 \left( 1- \delta^{-1} a^2 D_1\right) \int_{\doml} |u_2|^2 dx.
     \ee
      Therefore, if $E \in (0,\mu_2 \left( 1- \delta^{-1} a^2 D_1\right) - \mu_1)$, we have
      $$
      {\rm Tr}\,\one_{(-\infty,\mu_1 + E)}(\tilde{H}_{\gamma, \ell}) = 0,
      $$
      and by \eqref{d20},
     \bel{d11}
    {\rm Tr}\,\one_{(-\infty,\mu_1 + E)}(H_{\gamma, \ell}) \leq {\rm Tr}\,\one_{(-\infty,E)}((1-\delta) h_{\gamma, \delta/(1-\delta), \ell}) ={\rm Tr}\,\one_{(-\infty,(1-\delta)^{-1}E)}( h_{\gamma, \delta/(1-\delta), \ell}).
    \ee
      Now \eqref{ujf21}, \eqref{ujf20}, and \eqref{d11}, imply \eqref{d5}.\\
      Finally, assume \eqref{d6} and pick $\delta \in  \left(a^2 \left(1 - \frac{\mu_1}{\mu_2}\right)^{-1}  D_2, 1\right)$. Similarly to \eqref{a1}, we have
      $$
     2{\rm Re}\,\int_{\doml} \left(\gamma^2 \partial_\tau u_1  + 2 \gamma \partial_3 u_1
    + \dot{\gamma} u_1 \right)\, \overline{\partial_\tau u_2}\,  dx \geq
     $$
     $$
     -\delta \int_{M_\ell} \left(\frac{\gamma^2}{2} |\partial_\tau u_1|^2 + |\partial_3 u_1|^2 +  \frac{\Td \gamma^2}{2} |u_1|^2\right)dx
     -\delta^{-1} \int_{M_\ell} \left(6 \gamma^2 + \frac{2\dot{\gamma}^2}{\Td \gamma^2}\right) |\partial_\tau u_2|^2 dx =
     $$
     $$
     -\delta \int_{-\ell/2}^{\ell/2} \left(|\dot{f}|^2 + {\mathcal T}^2 \gamma^2|f|^2\right)dx_3
     -\delta^{-1} \int_{M_\ell} \left(6 \gamma^2 + \frac{2\dot{\gamma}^2}{\Td \gamma^2}\right) |\partial_\tau u_2|^2 dx.
     $$
      Hence, by analogy with \eqref{d10} and \eqref{d14},  we have
      $$
      Q_{\gamma, \ell}[u] \geq
      $$
      $$
     (1-\delta) q_{\gamma, 0, \ell} [f] + \mu_1 \|f\|^2_{{\rm L}^2(-\ell/2,\ell/2)} +
     $$
     $$
     \int_{\doml} \left(|\nabla_t u_2|^2 - \delta^{-1} \left(6 \gamma^2 + \frac{2\dot{\gamma}^2}{\Td \gamma^2}\right)|\partial_\tau u_2|^2 + |\gamma \partial_\tau u_2 + \partial_3 u_2|^2\right) dx \geq
     $$
     $$
      (1-\delta) q_{\gamma, 0, \ell} [f] + \mu_1 \|f\|^2_{{\rm L}^2(-\ell/2,\ell/2)} + \mu_2 \left( 1- \delta^{-1} a^2 D_2\right) \int_{\doml} |u_2|^2 dx.
     $$
      Therefore, if $E \in (0,\mu_2 \left( 1- \delta^{-1} a^2 D_2\right) - \mu_1)$, we have
     \bel{d13}
    {\rm Tr}\,\one_{(-\infty,\mu_1 + E)}(H_{\gamma, \ell}) \leq {\rm Tr}\,\one_{(-\infty,(1-\delta)^{-1}E)}( h_{\gamma, 0, \ell}).
    \ee
      Now \eqref{ujf21}, \eqref{ujf20}, and \eqref{d13}, imply \eqref{d7}.

  \end{proof}

  \section{Lifshits tails for the operator $H_\gamma$}
\label{s5}
In this section we formulate and prove our main results concerning the asymptotic behavior of $N_\gamma(\mu_1 + E)$ as $E \downarrow 0$. In Subsection \ref{ss51} we consider single-site twisting $w$ of power-like decay while in Subsection \ref{ss52} we handle the case of compactly supported $w$.

\subsection{Single-site twisting $w$ of power-like decay}
\label{ss51}

The following proposition contains results from \cite{kr} on the Lifshits tails for 1D Schr\"odinger operators with squared random Anderson-type potentials.
\begin{pr}[{\cite[Theorem 1]{kr}}] \label{fp1} 
Assume \eqref{m1}. Suppose that $w$ satisfies \eqref{ujf2} with $\alpha \in (1,\infty)$, and \eqref{f2}, while $\lambda_0$ satisfies
\eqref{ujf1} and \eqref{ujf8}.  Suppose moreover that
  \bel{ujf14}
  {\mathbb P}(\{\omega \in \Omega \, | \, |\lambda_0(\omega)| < \varepsilon\}) \geq C\varepsilon^\kappa,
  \ee
  for some $\kappa > 0$, $C >0$, and any sufficiently small $\varepsilon>0$.\\
{\rm (i)} If $\alpha \geq 2$, then
    \bel{f13}
\lim_{E \downarrow 0} \frac{\ln{|\ln{\nu_{\gamma,0}(E)}|}}{\ln{E}} = - \frac{1}{2}.
    \ee
{\rm (ii)} Let $1 < \alpha < 2$. Assume that
    \bel{f3}
    w(s) \geq C(1 + |s|)^{-\alpha}, \quad s \in \re, \quad C >0,
    \ee
    and
    \bel{f4}
    \lambda^- = 0.
    \ee
    Then
$$
\lim_{E \downarrow 0} \frac{\ln{|\ln{\nu_{\gamma,0}(E)}|}}{\ln{E}} = - \frac{1}{2(\alpha - 1)}.
$$
\end{pr}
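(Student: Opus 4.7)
The plan is to establish matching upper and lower bounds on $\nu_{\gamma,0}(E)$ as $E\downarrow 0$, recalling from~\eqref{ujf8a} that $\inf\sigma(h_{\gamma,0})=0$ almost surely. Since the random potential $V_\gamma:={\mathcal T}^2\gamma^2$ is non-negative, the Lifshits tail is driven by rare configurations on which $\gamma$ is uniformly small across an interval long enough to support a near-zero mode of $-d^2/ds^2$. The upper bound will come from Neumann bracketing, the lower bound from Dirichlet trial functions adapted to favorable configurations of $\{\lambda_k\}$.

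For the upper bound, I would use Neumann bracketing on boxes of length $\ell=\ell(E)$ to write
$$
\nu_{\gamma,0}(E)\leq \ell^{-1}\,{\mathbb E}\bigl[{\rm Tr}\,\one_{(-\infty,E)}(h_{\gamma,0}^{N,[0,\ell]})\bigr]\leq C(\ell,E)\,\ell^{-1}\,{\mathbb P}(\Lambda_1^N(\ell)<E),
$$
with a polynomial counting factor $C(\ell,E)$ that is harmless for the double logarithm. A variational computation shows that the event $\{\Lambda_1^N(\ell)<E\}$ forces $|\gamma(s)|\lesssim E^{1/2}$ on a sizeable fraction of $[0,\ell]$. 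By the independence of the $\lambda_k$'s and the small-ball estimate~\eqref{ujf14}, this event has probability at most $(cE^{\kappa/2})^{L(E)}$, where $L(E)$ denotes the number of sites whose $\lambda_k$ must be forced small to guarantee the requisite smallness of $\gamma$. For the matching lower bound I would use the dual event $\{|\lambda_k|\leq E^{1/2}:k\in I\}$ with $|I|=L(E)$: its probability is at least $(cE^{\kappa/2})^{L(E)}$ by~\eqref{ujf14}, and a suitably cut-off Dirichlet trial function supported in the corresponding interval produces, via the mini-max principle, an eigenvalue of $h_{\gamma,0}^{[0,\ell]}$ below $E$.

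The two exponents then emerge from the balance between $\ell$, $L(E)$, and $E$. In regime (i), $\alpha\geq 2$, the summability of $|w|$ localizes $\gamma$: forcing $|\gamma|\lesssim E^{1/2}$ on $[0,\ell]$ requires roughly $L(E)\sim\ell$ consecutive $\lambda_k$'s to be small, and the kinetic-energy scale $\ell\sim E^{-1/2}$ then yields $|\ln\nu_{\gamma,0}(E)|\gtrsim E^{-1/2}|\ln E|$, hence $\varkappa=\tfrac12$. In regime (ii), $1<\alpha<2$, even the configuration $\lambda_k=0$ for $|k|\leq L$ leaves an algebraic tail of order $L^{1-\alpha}$ at the centre of the box; the requirement $L^{1-\alpha}\lesssim E^{1/2}$ forces the much larger scale $L(E)\sim E^{-1/(2(\alpha-1))}$, yielding $\varkappa=\tfrac{1}{2(\alpha-1)}$.

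I expect the main obstacle to be the careful bookkeeping of this long-range tail in regime (ii). The upper bound must show that $\{\Lambda_1^N(\ell)<E\}$ genuinely propagates into a constraint on $\lambda_k$ for $|k|$ up to scale $L(E)\sim E^{-1/(2(\alpha-1))}\gg\ell$, which requires a quantitative inversion of the convolution $\lambda\mapsto\gamma$; symmetrically, the lower bound must verify, using the pointwise lower bound~\eqref{f3} on $w$ and the one-sided condition $\lambda^-=0$ from~\eqref{f4} (which rules out accidental cancellations between positive and negative $\lambda_k$'s in the series defining $\gamma$), that setting $\lambda_k$ small on a block of this size really produces $|\gamma|\leq E^{1/2}$ uniformly across the trial function's support. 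It is precisely the interplay between~\eqref{f3} and~\eqref{f4} that makes the exponent $1/(2(\alpha-1))$ sharp and matches the upper bound.
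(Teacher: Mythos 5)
The paper contains no proof of Proposition \ref{fp1}: it is quoted as \cite[Theorem 1]{kr} from the companion paper, so there is no internal argument to compare yours against. Taken on its own terms, your sketch follows the standard Lifshits-tail template (Neumann bracketing combined with the small-ball estimate \eqref{ujf14} for the upper bound, Dirichlet trial functions on rare low-potential configurations for the lower bound), and your scale balance correctly reproduces both exponents: the kinetic length $E^{-1/2}$ dominates when $\alpha\geq 2$, while for $1<\alpha<2$ the algebraic tail of order $L^{1-\alpha}$ left by zeroing a block of $L$ coefficients forces the larger scale $L\sim E^{-1/(2(\alpha-1))}$.

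The genuine gap is in part (i). There $w$ is only assumed to satisfy the upper bound \eqref{ujf2} and may change sign, so the step ``$\Lambda_1^N(\ell)<E$ forces $|\gamma|\lesssim E^{1/2}$ on a long interval, which in turn forces $\sim\ell$ of the $\lambda_k$ to be small'' requires ruling out cancellations among the terms $\lambda_k w(\cdot-k)$: large coefficients of alternating sign could in principle conspire to make $\gamma$ small on a long interval without any individual $\lambda_k$ being small, in which case the probabilistic cost $(CE^{\kappa/2})^{L(E)}$ is not justified. You flag the need for a ``quantitative inversion of the convolution'' only in regime (ii), where it is in fact not needed --- the sign conditions \eqref{f3} and \eqref{f4} make $\gamma$ nonnegative and monotone in each $\lambda_k$, so smallness of $\gamma$ does transfer termwise to the $\lambda_k$. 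It is in regime (i), where no sign condition is imposed, that the inversion is both essential and unaddressed in your sketch. Controlling this non-monotonicity is exactly the content of the Klopp--Nakamura/Shen machinery \cite{klonak,shen} that the present paper invokes for compactly supported $w$ (via the positivity of the single-site ground-state energy), and of \cite{kr} for single-site twistings of power-like decay; without some such ingredient your upper bound in (i) is incomplete, while the rest of the outline (the lower bounds in both regimes, and the upper bound in regime (ii) under \eqref{f3}--\eqref{f4}) is sound modulo routine large-deviation bookkeeping.
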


{\em Remark}: Evidently, we may replace the assumptions \eqref{f3} and \eqref{f4}, by $w(s) \leq -C(1 + |s|)^{-\alpha}$, $s \in \re$, with $C >0$, and $\lambda^+  = 0$ respectively. A similar remark applies to Theorems \ref{ft1} (ii) and \ref{fth1}. \\

Combining Theorem \ref{dt1} with Proposition \ref{fp1}, we obtain the following theorem concerning the Lifshits tails of the IDS $N_\gamma$ for the randomly twisted waveguide:

\begin{theorem} \label{ft1} Let $\cs \subset \rd$ be a bounded domain such that ${\mathcal T} \neq 0$. Assume that:
\begin{itemize}
 \item $w \in C^1(\re; \re)$ does not vanish identically on $\re$ and satisfies  the upper bound \eqref{ujf2} with $\alpha \in (1,\infty)$;
     \item $\lambda_0$ satisfies
\eqref{ujf1},  \eqref{ujf8}, and \eqref{ujf14};
    \item the logarithmic derivative $\dot{\gamma}/\gamma$ satisfies the boundedness condition \eqref{m63};
     \item the waveguide satisfies ``the thinness condition" \eqref{d6}.
     \end{itemize}
     {\rm (i)} Let $\alpha \in [2,\infty)$. Then we have
    \bel{ujf17}
\lim_{E \downarrow 0} \frac{\ln{|\ln{N_{\gamma}(\mu_1 + E)}|}}{\ln{E}} = - \frac{1}{2}.
    \ee
  {\rm (ii)} Let $\alpha \in (1,2)$.
    Suppose moreover that the lower bounds \eqref{f3} and \eqref{f4} hold true. Then we have
    $$
\lim_{E \downarrow 0} \frac{\ln{|\ln{N_{\gamma}(\mu_1 + E)}|}}{\ln{E}} = - \frac{1}{2(\alpha-1)}.
    $$
  \end{theorem}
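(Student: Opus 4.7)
The strategy is to sandwich $N_\gamma(\mu_1+E)$ between two quantities of the form $\nu_{\gamma,0}(cE)$ and then transfer the 1D Lifshits asymptotics of Proposition \ref{fp1} directly. The two bounds are already at our disposal: the lower bound
\[
\nu_{\gamma,0}(E) \ \leq\ N_\gamma(\mu_1+E)
\]
is just Theorem \ref{dt1}(i), while the matching upper bound comes from Theorem \ref{dt1}(iii), whose hypotheses (namely \eqref{d3}, \eqref{m1}, \eqref{m63}, and the thinness condition \eqref{d6}) are exactly what we have assumed in Theorem \ref{ft1}. Fixing any admissible $\delta \in \bigl(a^2(1-\mu_1/\mu_2)^{-1} D_2,\,1\bigr)$, Theorem \ref{dt1}(iii) yields
\[
N_\gamma(\mu_1+E)\ \leq\ \nu_{\gamma,0}\bigl((1-\delta)^{-1}E\bigr)
\]
for all sufficiently small $E>0$. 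Note that Theorem \ref{dt1}(iii) already absorbs the error terms produced by decoupling the transverse mode $\varphi_1$, which is why we obtain an upper bound involving $\nu_{\gamma,0}$ rather than $\nu_{\gamma,\delta/(1-\delta)}$; this is essential, because only $\nu_{\gamma,0}$ is covered by Proposition \ref{fp1}.

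Since $N_\gamma(\mu_1+E)$ and $\nu_{\gamma,0}(E)$ are in $(0,1)$ for $E$ close to $0$ (using \eqref{ujf8a} and the fact that $\mu_1$ is the bottom of $\sigma(H_\gamma)$), taking minus-logarithms in the sandwich and using monotonicity yields
\[
\bigl|\ln \nu_{\gamma,0}\bigl((1-\delta)^{-1}E\bigr)\bigr|\ \leq\ |\ln N_\gamma(\mu_1+E)|\ \leq\ |\ln \nu_{\gamma,0}(E)|.
\]
Taking $\ln$ once more and dividing by $\ln E<0$ flips the inequalities:
\[
\frac{\ln|\ln \nu_{\gamma,0}(E)|}{\ln E}\ \leq\ \frac{\ln|\ln N_\gamma(\mu_1+E)|}{\ln E}\ \leq\ \frac{\ln\bigl|\ln \nu_{\gamma,0}((1-\delta)^{-1}E)\bigr|}{\ln E}.
\]
The left-hand side tends to $-\varkappa$ as $E\downarrow 0$ by Proposition \ref{fp1}, with $\varkappa=1/2$ under the hypotheses of (i) and $\varkappa=1/(2(\alpha-1))$ under those of (ii). For the right-hand side, set $E':=(1-\delta)^{-1}E$ and write
\[
\frac{\ln|\ln\nu_{\gamma,0}(E')|}{\ln E}
\ =\ \frac{\ln|\ln\nu_{\gamma,0}(E')|}{\ln E'}\cdot\frac{\ln E'}{\ln E'+\ln(1-\delta)},
\]
so this quotient also tends to $-\varkappa\cdot 1=-\varkappa$. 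The two-sided squeeze yields the claimed limit.

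The content of the proof is therefore just to verify that the hypotheses of Proposition \ref{fp1} are satisfied and to run the logarithmic bookkeeping above. In particular, \eqref{ujf1}, \eqref{ujf8}, \eqref{ujf14} and $\mathcal{T}\neq 0$ (from \eqref{m1}) transfer verbatim from the hypotheses of Theorem \ref{ft1}, while in part (ii) the additional pointwise lower bound \eqref{f3} together with \eqref{f4} are assumed explicitly. The only genuinely non-routine step is making sure $\nu_{\gamma,0}$ appears on \emph{both} sides of the sandwich; this is where Theorem \ref{dt1}(iii), and hence the thinness assumption \eqref{d6} together with the regularity \eqref{m63} of $w$, is indispensable. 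I expect this to be the main conceptual obstacle — everything else is a routine manipulation of logarithms.
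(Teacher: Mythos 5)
Your proposal is correct and follows exactly the route the paper intends: the paper itself offers no written proof of Theorem \ref{ft1} beyond the sentence ``Combining Theorem \ref{dt1} with Proposition \ref{fp1}, we obtain the following theorem,'' and your sandwich via Theorem \ref{dt1}(i) and (iii) plus the double-logarithmic bookkeeping (including the harmless rescaling $E\mapsto(1-\delta)^{-1}E$) is precisely that combination made explicit. Your observation that part (iii) rather than part (ii) of Theorem \ref{dt1} must be used, so that $\nu_{\gamma,0}$ appears on both sides, correctly identifies the role of the hypotheses \eqref{m63} and \eqref{d6}.
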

{\em Remark}: If ${\mathcal T} = 0$, then
    $$
    \nu_{\gamma, 0}(E) = \nu_{0, 0}(E) = \frac{1}{\pi} E_+^{1/2}, \quad E \in \re.
    $$
    Therefore, \eqref{d8}  implies
    $$
    \liminf_{E \downarrow 0} \frac{\ln{|\ln{N_\gamma(\mu_1 + E)}|}}{\ln{E}} \geq 0,
    $$
    i.e. $N_\gamma$ does not exhibit a Lifshits tail near $\mu_1$. As mentioned in the introduction, if $\partial \cs \in C^2$, then ${\mathcal T} = 0$ is equivalent to the fact that $\cs$ is rotationally invariant with respect to the origin, and \eqref{m51} and \eqref{m2} hold true, i.e. $N_\gamma$ exhibits near $\mu_1$ a van Hove singularity instead of a Lifshits tail. A similar remark applies to Theorems \ref{th1} and \ref{fth1}. \\

\subsection{Single-site twisting $w$ of compact support}
\label{ss52}
In this subsection we assume that \eqref{f2} holds true, and
    \bel{ujf6}
    w \in C^1(\re;\re), \quad {\rm supp}\,w \subset [-\beta/2,\beta/2],
    \ee
    with $\beta \in (0,\infty)$.\\
    First, we consider the case where the support of $w$ is small, i.e. \eqref{ujf6} holds with $\beta \in (0,1]$.
    Then the multiplier by $\Td \gamma(s;\omega)^2 - \eps \dot{\gamma}(s;\omega)^2$ coincides with the multiplier by
$$
 \sum_{k \in \ze} \lambda_k(\omega)^2 v_\eps(s-k), \quad s \in \re,
$$
where
    \bel{m5}
v_\eps(s) : = \Td w(s)^2 - \eps \dot{w}(s)^2, \quad s \in \re.
    \ee

  For $\eps \in \re$ denote by ${\mathcal E}^\pm(\eps)$ the lowest eigenvalue of the operator
  \bel{f10}
 h^\pm_\eps : =  -\frac{d^2}{ds^2} + \tilde{\lambda}^\pm v_\eps,
  \ee
 acting in ${\rm L}^2(-1/2,1/2)$, and equipped with Neumann boundary conditions.
 If \eqref{ujf8} is fulfilled, then, evidently,
 \bel{m64}
 {\mathcal E}^-(\eps) = 0, \quad \eps \in \re.
 \ee
 Put
  \bel{ujf23}
  \eps_0 : = \sup\,{\{\eps \in \re \, | \, {\mathcal E}^+(\eps) > 0\}}.
  \ee
  It follows from \eqref{f2} and \eqref{ujf1} that if \eqref{m1} is valid, then $\eps_0 > 0$ since ${\mathcal E}^+(0) > 0$, and ${\mathcal E}^+$ is a continuous (as a matter of fact, real analytic) non-increasing function of $\eps \in \re$.
  Thus,
  \bel{m65}
 {\mathcal E}^+(\eps) > 0, \quad \eps \in (-\infty, \eps_0).
 \ee

  \begin{pr} \label{p2}
  Assume that \eqref{m1} holds true, $w$ satisfies \eqref{f2}, \eqref{ujf6} with $\beta \in (0,1]$, while $\lambda_0$ satisfies \eqref{ujf1} and \eqref{ujf8}.  Let $\eps \in (-\infty,\eps_0)$.\\
  {\rm (i)} We have almost surely
  \bel{ujf15}
  \inf \sigma(h_{\gamma,\eps}) = 0.
  \ee
  {\rm (ii)} Moreover,
  \bel{ujf16}
  \limsup_{E \downarrow 0} \frac{\ln{|\ln{\nu_{\gamma,\eps}(E)}|}}{\ln{E}} \leq -\frac{1}{2}.
  \ee
  \end{pr}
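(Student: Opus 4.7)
I would prove the two parts separately. For (i), the lower bound $\inf\sigma(h_{\gamma,\eps})\geq 0$ comes from Neumann bracketing along the integer partition $\mathbb{R}=\bigsqcup_{k\in\mathbb{Z}}[k-1/2,k+1/2]$. Since $\mathrm{supp}\,w\subset[-1/2,1/2]$, the shifted potentials are disjointly supported, so $h_{\gamma,\eps}\geq\bigoplus_k h_k^N$, where $h_k^N:=-d^2/ds^2+\lambda_k(\omega)^2 v_\eps(\cdot-k)$ is the Neumann realisation on the $k$-th cell. Its ground state energy is $\mathcal{E}(\lambda_k^2;\eps)$, where $\mu\mapsto\mathcal{E}(\mu;\eps)$ (the lowest Neumann eigenvalue of $-d^2/ds^2+\mu v_\eps$ on $[-1/2,1/2]$) is concave as an infimum of affine functions of $\mu$. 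The endpoint values $\mathcal{E}(0;\eps)=0$ and $\mathcal{E}(\tilde{\lambda}^+;\eps)=\mathcal{E}^+(\eps)>0$ (using $\eps<\eps_0$) force $\mathcal{E}(\mu;\eps)\geq(\mu/\tilde{\lambda}^+)\mathcal{E}^+(\eps)\geq 0$ for $\mu\in[0,\tilde{\lambda}^+]$; since a.s.\ $\lambda_k^2\in[0,\tilde{\lambda}^+]$, the claim follows. The matching upper bound $\inf\sigma(h_{\gamma,\eps})\leq 0$ is obtained by a Weyl sequence at $0$ as in \cite{kma}: by $\tilde{\lambda}^-=0$ and independence, Borel-Cantelli provides, a.s., arbitrarily long stretches of consecutive indices $k$ with $\lambda_k^2$ arbitrarily small; on such a stretch, both $\gamma$ and $\dot{\gamma}$ are uniformly small, so a slowly varying $L^2$-normalised bump supported strictly inside satisfies $\|h_{\gamma,\eps}f_n\|_{L^2}\to 0$ along a diagonal sequence in $n$ and $\delta_n\downarrow 0$.

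For (ii) the plan follows the standard Lifshits scheme. Neumann bracketing on a box of length $L$, combined with the elementary bound $\operatorname{Tr}\mathbf{1}_{(-\infty,E)}(h_{\gamma,\eps,L}^N)\leq CL$ for bounded $E$ (the potential being bounded), gives
\[ \nu_{\gamma,\eps}(E)\leq C\,\mathbb{P}\bigl(\inf\sigma(h_{\gamma,\eps,L}^N)<E\bigr). \]
The heart of the argument is a \emph{large-wells reduction}: if $\inf\sigma(h_{\gamma,\eps,L}^N)<E$, then the box must contain at least $R:=\lceil c/\sqrt{E}\rceil$ consecutive indices $k$ with $\lambda_k^2<\delta_0$, for fixed $\delta_0\in(0,\tilde{\lambda}^+)$ and $c>0$ independent of $E$. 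Granting this, with $p:=\mathbb{P}(\lambda_0^2<\delta_0)\in(0,1)$ (the strict upper bound comes from $\tilde{\lambda}^+>\delta_0$ together with non-degeneracy of $\lambda_0$), and $L$ chosen polynomial in $E^{-1}$, a union bound over positions of the good block yields $\mathbb{P}(\inf\sigma(h_{\gamma,\eps,L}^N)<E)\leq L p^{R}\leq e^{-c'/\sqrt{E}}$, so that $\nu_{\gamma,\eps}(E)\leq Ce^{-c'/\sqrt{E}}$; taking logarithms twice gives $\ln|\ln\nu_{\gamma,\eps}(E)|\geq\tfrac{1}{2}|\ln E|+O(1)$, hence the stated $\limsup\leq-\tfrac{1}{2}$.

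The main technical obstacle is the large-wells implication, which rests on combining the cellwise bracketing from~(i) with a barrier estimate. The bracketing shows that the normalised ground state $\psi$ of $h_{\gamma,\eps,L}^N$ (with eigenvalue $E_0<E$) must concentrate on the ``good'' cells $\{k:\lambda_k^2<\delta_0\}$: the cell operator on any ``bad'' cell is bounded below by $c_0:=\mathcal{E}(\delta_0;\eps)>0$, so $\sum_{k \,\mathrm{bad}}\pi_k\leq E/c_0$ where $\pi_k:=\int_{\mathrm{cell}_k}|\psi|^2$, giving good cells mass $\geq 1/2$ for $E<c_0/2$. To upgrade this into ``$R$ consecutive good cells'', one shows that bad cells act as effective barriers: if no block of $R$ consecutive good cells exists, then $\psi$ is confined to good regions of length $<R$ flanked by barriers of height $\geq c_0$, incurring either a kinetic cost $\gtrsim 1/R^2$ on the good regions or a potential cost $\gtrsim c_0$ on the flanking bad cells, so that $E_0\gtrsim 1/R^2$; setting $R=c/\sqrt{E}$ closes the argument. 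Technically this can be carried out via an Agmon-type exponential decay estimate across bad cells, or by an IMS partition of unity on scale $R$ with localisation error $O(1/R^2)$, along the lines of the classical one-dimensional Lifshits tail derivations (cf.\ \cite{klonak} and the companion paper \cite{kr}).
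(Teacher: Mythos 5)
Your proposal takes a genuinely more self-contained route than the paper: the paper's ``proof'' of Proposition \ref{p2} consists essentially of invoking \cite[Proposition 0.1]{klonak} for \eqref{ujf15} and \cite[Theorem 0.1]{klonak} for \eqref{ujf16}, together with two remarks on matching hypotheses (the evenness of $v_\eps$, replaced here by \eqref{m64}, and the closed versus open support of $v_\eps$ when $\beta=1$). Your argument for part (i) is correct and is in effect the one-dimensional specialization of the Klopp--Nakamura proof: Neumann bracketing over unit cells (legitimate here because $\supp v_\eps\subset[-1/2,1/2]$ so the translates decouple), concavity of the lowest Neumann eigenvalue $\mu\mapsto\mathcal{E}(\mu;\eps)$ in the coupling constant, and the chord bound through $\mathcal{E}(0;\eps)=0$ and $\mathcal{E}(\tilde{\lambda}^+;\eps)={\mathcal E}^+(\eps)>0$ (which is \eqref{m65}); the upper bound via Borel--Cantelli and a Weyl sequence is the standard argument of \cite{kma}. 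A bonus of your route is that it bypasses the hypothesis-matching issues the paper has to discuss. For part (ii), your reduction $\nu_{\gamma,\eps}(E)\leq C\,\mathbb{P}(\inf\sigma(h^N_{\gamma,\eps,L})<E)$ and the probabilistic conclusion are standard and correct.

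The one soft spot is the ``large-wells'' deterministic step in (ii), which you rightly identify as the heart of the matter but whose sketch is not quite accurate as stated: for $\eps\in(0,\eps_0)$ the single-site potential $v_\eps$ in \eqref{m5} is sign-indefinite, so a ``bad'' cell (one with $\lambda_k^2\geq\delta_0$) is \emph{not} a pointwise potential barrier of height $c_0$, and an Agmon-type tunneling estimate ``across bad cells'' does not directly apply. All one has on a bad cell is the form-level bound $q_k[u]\geq\mathcal{E}(\lambda_k^2;\eps)\|u\|^2_{{\rm L}^2({\rm cell}_k)}\geq c_0\|u\|^2_{{\rm L}^2({\rm cell}_k)}$, and converting this, together with a fraction of the kinetic energy, into the bound $E_0\gtrsim R^{-2}$ on intervals of length $\sim R$ containing a bad cell requires a splitting of the form $q_k=\theta\int|u'|^2+(1-\theta)(\,\cdots)$ combined with a Poincar\'e-type inequality --- precisely the nontrivial content of \cite[Theorem 0.1]{klonak} (see also \cite{shen} and \cite{kr}). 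So your outline is the right one, but to make it a proof you must replace the barrier/Agmon heuristic by the cell-form argument; as written, that step would fail for sign-indefinite $v_\eps$.
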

 \begin{proof}[Idea of the proof of Proposition \ref{p2}:]
 Taking into account \eqref{ujf8}, \eqref{m64}, and \eqref{m65}, we find that \eqref{ujf15} follows from \cite[Proposition 0.1]{klonak}. Note that the hypotheses of \cite[Proposition 0.1]{klonak} contain also the condition that $v_\eps$ be an even function of $s \in \re$. However, this condition is needed to guarantee that the eigenfunction of the operator $h^-_\eps$ is even, which in our setting is immediately implied by \eqref{m64}.\\
 Further, bearing in mind \eqref{ujf15}, \eqref{m64}, and \eqref{m65}, we easily conclude that \eqref{ujf16} follows from \cite[Theorem 0.1]{klonak}. \\
 It should be noted here that the assumptions of Proposition 0.1 and Theorem 0.1 of \cite{klonak} require that ${\rm supp}\,v_\eps \subset (-1/2,1/2)$ which may formally exclude the case $\beta = 1$ in \eqref{ujf6}. A careful analysis of the proofs of Proposition 0.1 and Theorem 0.1 of \cite{klonak} however shows that these proofs extend without any problem to the case ${\rm supp}\,v_\eps \subset [-1/2,1/2]$. \\
 \end{proof}
{\em Remarks}: (i) Proposition \ref{p2} also follows from the results of the article \cite{shen} which extends \cite{klonak}. More precisely, \eqref{ujf15} follows from \cite[Theorem 1.1]{shen}, while \eqref{ujf16} follows from
\cite[Theorem 1.2]{shen}.\\
(ii) If $\eps \leq 0$ and hence $v_{\eps}$ does not change sign, \eqref{ujf15} and \eqref{ujf16} have been known since long ago (see \cite{kma} and \cite{ksim1} respectively). However, the case $\eps \leq 0$ is not appropriate for our purposes.

  \begin{theorem} \label{th1}
  Let $\cs \subset \rd$ be a bounded domain such that ${\mathcal T} \neq 0$.
  Assume that:
   \begin{itemize}
   \item $w$ does not vanish identically on $\re$ and satisfies  \eqref{ujf6} with $\beta \in (0,1]$;
   \item $\lambda_0$ satisfies \eqref{ujf1}, \eqref{ujf8}, and \eqref{ujf14};  \item the waveguide satisfies ``the thinness condition"  \eqref{d4}  with $\delta_0 = \frac{\eps_0}{1+\eps_0}$, $\eps_0$ being defined in \eqref{ujf23}.
       \end{itemize}
       Then
  \eqref{ujf17} is valid again.
  \end{theorem}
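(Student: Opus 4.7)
The plan is to prove~\eqref{ujf17} by combining the sandwich estimates of Theorem~\ref{dt1} with Proposition~\ref{p2}(ii), in close analogy with the proof of Theorem~\ref{ft1}. The only new subtlety is that Proposition~\ref{p2}(ii) supplies just the $\limsup$ bound on the Lifshits exponent of $\nu_{\gamma,\eps}$, so the matching $\liminf$ bound on $N_\gamma$ has to be produced separately by an elementary Dirichlet trial-function argument based on Theorem~\ref{dt1}(i).

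For the $\limsup$ direction I would fix $\delta\in\bigl(a^2(1-\mu_1/\mu_2)^{-1}D_1,\,\delta_0\bigr)$ with $\delta_0=\eps_0/(1+\eps_0)$, which is permitted by the thinness assumption~\eqref{d4}. The map $\delta\mapsto\delta/(1-\delta)$ is strictly increasing on $(0,1)$ and sends $\delta_0$ to $\eps_0$, so $\eps:=\delta/(1-\delta)\in(0,\eps_0)$. Theorem~\ref{dt1}(ii) then gives $N_\gamma(\mu_1+E)\le\nu_{\gamma,\eps}((1-\delta)^{-1}E)$ for all small $E>0$, while Proposition~\ref{p2}(ii) applied to this $\eps$ yields $\limsup_{E\downarrow 0}\frac{\ln|\ln\nu_{\gamma,\eps}(E)|}{\ln E}\le-\tfrac12$; the positive rescaling $E\mapsto(1-\delta)^{-1}E$ leaves this limit invariant (since $\ln((1-\delta)^{-1}E)/\ln E\to 1$), so the desired upper bound on the Lifshits exponent of $N_\gamma$ follows at once.

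For the $\liminf$ direction, Theorem~\ref{dt1}(i) reduces the task to proving $\liminf_{E\downarrow 0}\frac{\ln|\ln\nu_{\gamma,0}(E)|}{\ln E}\ge-\tfrac12$. Given small $E>0$, I would set $\ell:=2\pi E^{-1/2}$ and consider the event $\Omega_E:=\{\omega:|\lambda_k(\omega)|\le c\,E^{1/2}\ \text{for all integers}\ |k|\le\ell/2+\beta/2\}$. On $\Omega_E$, the compact support~\eqref{ujf6} with $\beta\le 1$ forces at most two nonvanishing terms in $\gamma(s;\omega)=\sum_k\lambda_k w(s-k)$ at each $s\in(-\ell/2,\ell/2)$, giving $\|\gamma(\cdot;\omega)\|_{{\rm L}^\infty(-\ell/2,\ell/2)}\le 2c\|w\|_\infty E^{1/2}$. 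Using $f(s):=\sin(\pi(s+\ell/2)/\ell)\in\mathrm{H}^1_0(-\ell/2,\ell/2)$ as a trial function for the Dirichlet realization of $h_{\gamma,0,\ell}$ then produces a Rayleigh quotient at most $(\pi/\ell)^2+4c^2\T^2\|w\|_\infty^2 E=(\tfrac14+4c^2\T^2\|w\|_\infty^2)E$, which is strictly less than $E$ once $c$ is fixed small enough, so that the lowest Dirichlet eigenvalue of $h_{\gamma,0,\ell}$ sits below $E$ on $\Omega_E$. Superadditivity of the Dirichlet counting function (deterministic consequence of $h^D_{(-\ell/2,\ell/2)}\le h^D_{\Omega_1}\oplus h^D_{\Omega_2}$) combined with the Dirichlet variant of~\eqref{ujf20} then gives $\nu_{\gamma,0}(E)\ge\ell^{-1}\mathbb{E}[N^D(E;h_{\gamma,0,\ell})]\ge\ell^{-1}\mathbb{P}(\Omega_E)$, and independence of the $\lambda_k$'s with~\eqref{ujf14} bounds $\mathbb{P}(\Omega_E)\ge(C(cE^{1/2})^\kappa)^{\ell+O(1)}\ge\exp(-c_1 E^{-1/2}|\ln E|)$. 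Hence $|\ln\nu_{\gamma,0}(E)|\le c_2 E^{-1/2}|\ln E|$, and dividing $\ln|\ln\nu_{\gamma,0}(E)|$ by $\ln E<0$ yields the desired $\liminf\ge-\tfrac12$.

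The main obstacle is the algebraic compatibility in the upper-bound step: Theorem~\ref{dt1}(ii) requires $\delta$ above the positive threshold imposed by $D_1$, while Proposition~\ref{p2}(ii) requires $\delta/(1-\delta)<\eps_0$, and the specific choice $\delta_0=\eps_0/(1+\eps_0)$ built into the thinness hypothesis is precisely what makes these two ranges overlap.
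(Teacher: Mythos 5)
Your proposal is correct, and the $\limsup$ half coincides exactly with the paper's argument: the paper's entire proof of Theorem~\ref{th1} is the observation that $\delta<\delta_0=\eps_0/(1+\eps_0)$ forces $\eps=\delta/(1-\delta)<\eps_0$, after which \eqref{d5} and \eqref{ujf16} give the upper bound and \eqref{d8} together with \eqref{f13} give the lower bound. Where you genuinely diverge is the $\liminf$ direction: the paper simply invokes \eqref{f13}, i.e.\ Proposition~\ref{fp1}~(i) from the companion paper \cite{kr}, which applies here because a compactly supported $w$ satisfies \eqref{ujf2} for every $\alpha$, in particular $\alpha\geq 2$; you instead re-derive the only piece actually needed, namely $\liminf_{E\downarrow 0}\ln|\ln\nu_{\gamma,0}(E)|/\ln E\geq -\tfrac12$, by the classical Lifshits lower-bound scheme (box of length $\ell\sim E^{-1/2}$, the event that all couplings in the box are $O(E^{1/2})$, the ground state of the Dirichlet Laplacian on the box as trial function, Dirichlet superadditivity, and \eqref{ujf14} with independence). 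That argument is sound and buys self-containedness for this theorem at the price of about a page; the paper's citation buys brevity and also the matching upper bound on $\nu_{\gamma,0}$, which you do not need here. Two small points to tidy up in your version: take $\ell$ to be an integer (e.g.\ $\ell=\lceil 2\pi E^{-1/2}\rceil$), since the superadditive/ergodic step partitions $(-L/2,L/2)$ into translates over which the potential is identically distributed only for integer shifts, and note that the change only improves the kinetic term $(\pi/\ell)^2\leq E/4$; and record explicitly that the hypothesis \eqref{d3} required by Theorem~\ref{dt1}~(ii) is automatic for $C^1$ compactly supported $w$.
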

  \begin{proof}
 If $\delta < \frac{\eps_0}{1+\eps_0}$, then  $\delta/(1-\delta) < \eps_0$. Therefore, \eqref{ujf17} follows from \eqref{d8}, \eqref{d5}, \eqref{f13} and \eqref{ujf16}.
  \end{proof}

Further, we consider the case where the support of $w$ may be large, i.e. \eqref{f2}, and \eqref{ujf6} with $\beta \in (1,\infty)$ hold true; then the supports of the translates of $w$ may have a substantial overlap. Without any loss of generality, we assume that $\beta = 2p + 1$ with $p \in \N$. Set $\J : = \left\{-p,\ldots,p\right\}$, and
    $$
    \J_1 : = \left\{j \in \J \, | \, w \not \equiv 0 \quad \mbox{on} \left[-\frac{1}{2}+j, \frac{1}{2}+j\right]\right\},
$$
$$
    \J_2 : = \left\{j \in \J \, | \, \dot{w} \not \equiv 0 \quad \mbox{on} \left[-\frac{1}{2}+j, \frac{1}{2}+j\right]\right\},
$$
$$
n_k := \# \J_k, \quad k=1,2.
$$
Evidently, $\J_2 \subset \J_1$, and $n_1 \geq n_2 \geq 1$.
By analogy with \eqref{m5}, set
    \bel{f19}
v_{j,\eps}(s) := \left(\Td w(s+j)^2 - n_2 \eps \dot{w}(s+j)^2\right) \one_{[-1/2,1/2)}(s), \quad s \in \re, \quad \eps \in \re, \quad j \in \J,
    \ee
so that $\supp\,{v_{j,\eps}} \subset [-1/2,1/2]$.  By analogy with \eqref{f10}, for $\eps \in \re$, consider the Neumann realization of the operators
    \bel{f11}
 h^\pm_{j,\eps} : =   -\frac{d^2}{ds^2} + n_1 \tilde{\lambda}^\pm v_{j,\eps}, \quad j \in \J_1,
    \ee
    restricted on $(-1/2,1/2)$. Denote by ${\mathcal E}^\pm_j(\eps)$, $j \in \J_1$, the lowest eigenvalue of the operator $h^\pm_{j,\eps}$. Put
    $$
    \eps_0^{\rm min} : = \min_{j \in \J_1} \sup{\left\{\eps \in \re \, | \, {\mathcal E}^+_j(\eps)>0\right\}}.
    $$
    By analogy with \eqref{m64}, we have
     \bel{m66}
 {\mathcal E}^-_j(\eps) = 0, \quad \eps \in \re, \quad j \in \J_1,
 \ee
 if \eqref{ujf8} holds true. Moreover, if \eqref{f2}, \eqref{ujf1}, and \eqref{m1} are valid, we have $\eps_0^{\rm min} > 0$, and
 \bel{m67}
 {\mathcal E}^+_j(\eps) > 0, \quad \eps \in (-\infty, \eps_0^{\rm min}), \quad j \in \J_1,
 \ee
 by analogy with \eqref{m65}.

    \begin{theorem} \label{fth1}
  Let $\cs \subset \rd$ be a bounded domain such that ${\mathcal T} \neq 0$.
  Assume that:
  \begin{itemize}
  \item $w$ does not vanish identically on $\re$, and satisfies \eqref{ujf6} with $\beta = 2p + 1$, $p \in \N$, and
   \bel{f12}
  w(s) \geq 0, \quad s \in \re,
  \ee
   \item $\lambda_0$ satisfies \eqref{ujf1}, \eqref{ujf8}, \eqref{ujf14}, and \eqref{f4};
   \item the waveguide satisfies ``the thinness condition" \eqref{d4} with $\delta_0 = \frac{\eps_0^{\rm min}}{1+\eps_0^{\rm min}}$.
   \end{itemize}
   Then, again, we have
  \eqref{ujf17}.
  \end{theorem}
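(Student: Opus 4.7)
The plan is to run the same lower bound / upper bound sandwich as in the proof of Theorem~\ref{th1}, with the analogue of Proposition~\ref{p2} replaced by a version able to cope with overlapping translates of~$w$. Since $w$ has compact support it satisfies the decay estimate \eqref{ujf2} for every $\alpha \geq 2$, so Proposition~\ref{fp1}(i) gives \eqref{f13} for $\nu_{\gamma,0}$, and Theorem~\ref{dt1}(i) transfers this into
\begin{equation*}
\liminf_{E \downarrow 0} \frac{\ln|\ln N_\gamma(\mu_1+E)|}{\ln E} \;\geq\; -\tfrac{1}{2}.
\end{equation*}
For the matching upper bound, the thinness hypothesis with $\delta_0 = \eps_0^{\min}/(1+\eps_0^{\min})$ makes the admissible interval in Theorem~\ref{dt1}(ii) non-empty and allows the choice of $\delta$ with $\eps := \delta/(1-\delta) \in (0,\eps_0^{\min})$. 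Then $N_\gamma(\mu_1+E)\leq \nu_{\gamma,\eps}((1-\delta)^{-1}E)$ for small $E>0$, and the whole proof reduces to the 1D Lifshits bound
\begin{equation*}
\limsup_{E \downarrow 0} \frac{\ln|\ln \nu_{\gamma,\eps}(E)|}{\ln E} \;\leq\; -\tfrac{1}{2}, \qquad \eps \in (-\infty,\eps_0^{\min}),
\end{equation*}
which is the overlap analogue of \eqref{ujf16} and constitutes the main obstacle.

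Here the assumptions \eqref{f12} and \eqref{f4} become crucial: together they force $w \geq 0$ and $\lambda_k \geq 0$ almost surely, so every cross term in $\gamma^2 = (\sum_k \lambda_k w(\cdot-k))^2$ is non-negative. Combining this non-negativity with the Cauchy--Schwarz bound $\dot\gamma(s)^2 \leq n_2 \sum_k \lambda_k^2 \dot w(s-k)^2$, I obtain, on each unit cell $[m-\tfrac12,m+\tfrac12)$,
\begin{equation*}
\mathcal{T}^2 \gamma(s)^2 - \eps\,\dot\gamma(s)^2 \;\geq\; \sum_{j \in \J_1} \lambda_{m-j}^{2}\,v_{j,\eps}(s-m),
\end{equation*}
with $v_{j,\eps}$ the single-site object introduced in \eqref{f19}. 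Neumann bracketing for $h_{\gamma,\eps,\ell}$ on unit sub-intervals then bounds its lowest eigenvalue from below by the minimum over cells $m$ of the ground state energy of $-d^2/ds^2+\sum_{j\in\J_1}\lambda_{m-j}^2 v_{j,\eps}$ with Neumann conditions on $[-\tfrac12,\tfrac12]$.

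By \eqref{m66}--\eqref{m67}, for $\eps < \eps_0^{\min}$ and every $j \in \J_1$ one has $\mathcal{E}^+_j(\eps) > 0$, so this per-cell ground state is bounded below by a positive deterministic constant as soon as any one of the relevant $\lambda_{m-j}$ exceeds a fixed threshold $\lambda^* > 0$. Hence, for $E$ small, the event that the per-cell ground state is below $E$ forces all $\lambda_{m-j}$ with $j \in \J_1$ to be below $\lambda^*$. Requiring this simultaneously on $\sim E^{-1/2}$ consecutive cells yields, by independence and the anti-concentration hypothesis~\eqref{ujf14}, probability at most $\exp(-cE^{-1/2})$. The standard Lifshits tail optimization carried out in \cite{klonak,shen,kr} then produces the claimed upper bound on $\nu_{\gamma,\eps}$; feeding it into the sandwich via Theorem~\ref{dt1}(ii) completes the proof of \eqref{ujf17}.
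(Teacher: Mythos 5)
Your overall architecture is the right one and the first half of your argument coincides with the paper's: the lower bound follows from \eqref{d8} and Proposition \ref{fp1}(i) since a compactly supported $C^1$ function satisfies \eqref{ujf2} for every $\alpha$; the choice of $\delta$ with $\eps=\delta/(1-\delta)<\eps_0^{\rm min}$ via Theorem \ref{dt1}(ii) is correct; and your pointwise lower bound $\Td\gamma^2-\eps\dot\gamma^2\geq\sum_k\lambda_k^2(\Td w(\cdot-k)^2-n_2\eps\,\dot w(\cdot-k)^2)$, obtained from \eqref{f12}, \eqref{f4} and Cauchy--Schwarz, is exactly the paper's \eqref{f16}--\eqref{f18}. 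The problem is the step you yourself identify as ``the main obstacle'': your derivation of the Lifshits upper bound for $\nu_{\gamma,\eps}$ from unit-cell Neumann bracketing does not work. Bracketing on unit cells gives $E_1(h^N_{\gamma,\eps,\ell})\geq\min_m E_1(\mbox{cell }m)$, so the event $E_1<E$ forces \emph{one} cell to have low ground state, not $\sim E^{-1/2}$ consecutive cells; there is no mechanism in your sketch that forces smallness of the couplings on many cells simultaneously, and consequently the probability you obtain is at best polynomially, not exponentially, small in $E^{-1/2}$. (Each Neumann cell already has ground state $0$ when its couplings vanish, so decoupling into unit cells destroys precisely the rigidity that produces Lifshits tails; the correct argument requires boxes of side $L\sim E^{-1/2}$ together with a quantitative statement that a low ground state on such a box forces most of the $\lambda_k$ in it to be small -- this is the nontrivial content of \cite[Theorem 0.1]{klonak} for sign-indefinite single-site potentials, which cannot be reproved in two lines.) In addition, you invoke \eqref{ujf14} to bound this probability from above, but \eqref{ujf14} is a \emph{lower} bound on ${\mathbb P}(|\lambda_0|<\varepsilon)$; it is used for the lower bound \eqref{f14} (through \eqref{f13}), while the upper bound on the probability that many couplings are small uses only that $P_0$ is not concentrated at $0$.

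There is also a smaller unproved assertion: that the ground state of $-d^2/ds^2+\sum_{j\in\J_1}\lambda_{m-j}^2 v_{j,\eps}$ is bounded below by a positive constant once one $\lambda_{m-j}$ exceeds a threshold. Since $v_{j,\eps}$ is sign-indefinite, this is not obvious; it does hold, by concavity of the Neumann ground state energy in the coupling constants together with $\sum_{j\in\J_1}\lambda_{m-j}^2\leq n_1\tilde\lambda^+$ and \eqref{m67} -- note that the factor $n_1$ in \eqref{f11} is exactly what makes this convex-combination argument close -- but it must be said. The paper circumvents all of this differently: Lemma \ref{fl1}, a superadditivity of the IDS proved via the Weyl inequalities, splits the overlapping model \eqref{f19a} into the $n_1$ \emph{non-overlapping} Anderson models $h_{0,0}+n_1\sum_k\lambda_{k-j}^2 v_{j,\eps}(\cdot-k)$, $j\in\J_1$, each of which falls directly under \cite[Theorem 0.1]{klonak} (as in Proposition \ref{p2}), yielding \eqref{f22} and hence \eqref{f23}. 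You need either to import Lemma \ref{fl1} (or an equivalent decomposition) so that the known single-site results apply, or to carry out the full Klopp--Nakamura large-box analysis yourself; the unit-cell probabilistic shortcut is not a valid substitute.
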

  For the proof of Theorem \ref{fth1}, we will need Lemma \ref{fl1} below. Let us recall that by \eqref{f1}, $h_{0,0}$ is simply the operator $-\frac{d^2}{ds^2}$, self-adjoint in ${\rm L}^2(\re)$, while $h_{0,0,\ell}$ is the Dirichlet realization of its restriction onto $(-\ell/2,\ell/2)$, $\ell \in (0,\infty)$.
  \begin{lemma} \label{fl1}
  Let $n \in \N$, $V_j : \re \times \Omega \to \re$, $j=1,\ldots,n$, be almost surely bounded ergodic potentials. Let $\rho_j$ be the IDS for the operator $h_{0,0} + nV_j$, $j=1,\ldots,n$, and $\rho$ be the IDS for the operator $h_{0,0} + \sum_{j=1}^n V_j$. Then we have
    \bel{f24}
  \rho(E) \leq \sum_{j=1}^n \rho_j(E), \quad E \in \re.
     \ee
     \end{lemma}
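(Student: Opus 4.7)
The plan is to exploit the form identity
$$
h_{0,0} + \sum_{j=1}^{n} V_j \;=\; \frac{1}{n}\sum_{j=1}^{n}\bigl(h_{0,0} + n V_j\bigr),
$$
which holds for the closed quadratic forms in ${\rm L}^2(\re)$ and, after Dirichlet restriction to a box $(-\ell/2,\ell/2)$, for the corresponding box forms, since the $V_j$ are bounded and all the forms share the common domain ${\rm H}^1_0(-\ell/2,\ell/2)$. Writing $H := h_{0,0} + \sum_j V_j$ and $H_j := h_{0,0} + n V_j$, the main step is to establish the finite-box counterpart of \eqref{f24}: for every $\ell > 0$ and every $E \in \re$,
\bel{planA}
{\rm Tr}\,\one_{(-\infty,E)}(H_\ell) \;\leq\; \sum_{j=1}^{n} {\rm Tr}\,\one_{(-\infty,E)}(H_{j,\ell}).
\ee

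To prove \eqref{planA}, set $P := \one_{(-\infty,E)}(H_\ell)$ and $P_j := \one_{(-\infty,E)}(H_{j,\ell})$. Since each box operator has compact resolvent, ${\rm Range}\,P$ and each ${\rm Range}\,P_j$ are finite-dimensional subspaces of the form domain ${\rm H}^1_0(-\ell/2,\ell/2)$, spanned by eigenfunctions. It suffices to show that the linear map
$$
T: {\rm Range}\,P \longrightarrow \bigoplus_{j=1}^{n} {\rm Range}\,P_j, \qquad T f := (P_j f)_{j=1}^{n},
$$
is injective, since this immediately yields ${\rm rank}\,P \leq \sum_j {\rm rank}\,P_j$, i.e., \eqref{planA}. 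Assume $Tf = 0$, so that $f \in {\rm Range}\,P$ and $P_j f = 0$ for every $j$. Then $f$ belongs to the spectral subspace of each $H_{j,\ell}$ associated with $[E,\infty)$; being in addition an element of the form domain ${\rm H}^1_0(-\ell/2,\ell/2)$, the spectral theorem gives $(H_{j,\ell} f, f) \geq E\|f\|^2$ for every $j$. The form identity then yields
$$
(H_\ell f, f) \;=\; \frac{1}{n}\sum_{j=1}^{n} (H_{j,\ell} f, f) \;\geq\; E\|f\|^2.
$$
On the other hand, membership in ${\rm Range}\,P$ forces $(H_\ell f, f) < E \|f\|^2$ whenever $f \neq 0$, so $f = 0$, proving injectivity.

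To conclude, I divide \eqref{planA} by $\ell$ and let $\ell \to \infty$. For each fixed $E \in \re$, on the full-measure $\omega$-set obtained by intersecting the convergence sets associated with $\rho$ and $\rho_1,\ldots,\rho_n$, the definition of the IDS recalled in \eqref{ujf20} (together with continuity of the IDS for 1D Schr\"odinger operators with bounded ergodic potential) gives $\rho(E) \leq \sum_{j=1}^n \rho_j(E)$, and since this is a deterministic statement, \eqref{f24} follows.

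The argument is entirely soft, so no serious obstacle arises; the only minor point worth a second glance is that the operator-level form identity $H = \frac{1}{n}\sum_j H_j$ transfers intact to the box problems under Dirichlet boundary conditions, which it does precisely because all the $V_j$ are bounded and all the relevant forms live on the same Sobolev space ${\rm H}^1_0(-\ell/2,\ell/2)$.
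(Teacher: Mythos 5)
Your proof is correct and takes essentially the same route as the paper: both rest on the decomposition $h_{0,0,\ell}+\sum_{j=1}^{n}V_j=\sum_{j=1}^{n}\bigl(\tfrac{1}{n}h_{0,0,\ell}+V_j\bigr)$, the resulting eigenvalue-counting inequality on finite boxes, and the passage to the limit $\ell\to\infty$ via the IDS definition. The only difference is that the paper invokes the Weyl inequalities from Reed--Simon for the counting step, whereas you reprove that inequality from scratch by the rank/injectivity argument, which is a valid self-contained substitute.
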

     {\em Remark}: Lemma \ref{fl1} admits an immediate extension to general multi-dimensional ergodic Schr\"odinger operators. The above formulation of the lemma is both convenient and sufficient for our purposes.
     \begin{proof}[Proof of Lemma \ref{fl1}]
    Let $E \in \re$. Then
    \bel{f25}
    \rho_j(E) = \lim_{\ell \to \infty} \ell^{-1}\,  {\rm Tr} \, \one_{(-\infty,E)}(h_{0,0,\ell} + nV_j), \quad j =1,\ldots,n,
    \ee
    \bel{f26}
    \rho(E) = \lim_{\ell \to \infty} \ell^{-1} \, {\rm Tr} \, \one_{(-\infty,E)}\left(h_{0,0,\ell} + \sum_{j=1}^n V_j\right).
    \ee
    On the other hand, a suitable version of the Weyl inequalities (see e.g. \cite[Eq.(125)]{RS4}) implies
    $$
    {\rm Tr} \, \one_{(-\infty,E)}\left(h_{0,0,\ell} + \sum_{j=1}^n V_j\right) = {\rm Tr} \, \one_{(-\infty,0)}\left(\sum_{j=1}^n\left(\frac{1}{n} h_{0,0,\ell} +  V_j - \frac{1}{n}E\right)\right) \leq
    $$
    \bel{f27}
    \sum_{j=1}^n {\rm Tr} \, \one_{(-\infty,0)}\left(\frac{1}{n} h_{0,0,\ell} +  V_j - \frac{1}{n}E\right) =
    \sum_{j=1}^n {\rm Tr} \, \one_{(-\infty,E)}\left(h_{0,0,\ell} +  nV_j\right).
    \ee
    Combining \eqref{f25}, \eqref{f26}, and \eqref{f27}, we arrive at \eqref{f24}.
    \end{proof}
  \begin{proof}[Proof of Theorem \ref{fth1}]
  By \eqref{d8} and \eqref{f13}, we immediately get
  \bel{f14}
\liminf_{E \downarrow 0} \frac{\ln{|\ln{N_{\gamma}(\mu_1 + E)}|}}{\ln{E}} \geq - \frac{1}{2}.
    \ee
    Let us obtain the corresponding upper bound. By \eqref{f12} and \eqref{f4}, we have
    \bel{f16}
    \gamma(s;\omega)^2 \geq \sum_{k \in \Z} \lambda_k(\omega)^2 w(s-k)^2, \quad s \in \re.
    \ee
    Applying the Cauchy-Schwarz inequality, we easily find that
    $$
    \dot{\gamma}(s;\omega)^2 = \left(\sum_{k \in \Z} \lambda_k(\omega) \dot{w}(s-k)\right)^2 =
    $$
    \bel{f17}
    = \left(\sum_{k \in \Z} \lambda_k(\omega) \dot{w}(s-k) \sum_{j \in \J_2} \one_{[-1/2,1/2)}(s-k-j)\right)^2 \leq n_2 \sum_{k \in \Z} \lambda_k(\omega)^2 \dot{w}(s-k)^2, \quad s \in \re.
    \ee
    Putting together \eqref{f16} and \eqref{f17}, we find that if $\eps \geq 0$, then
    \bel{f18}
    \Td \gamma(s;\omega)^2 - \eps \dot{\gamma}(s;\omega)^2 \geq \sum_{k \in \Z} \lambda_k(\omega)^2\left(\Td w(s-k)^2 - n_2 \eps \dot{w}(s-k)^2\right), \quad s \in \re.
    \ee
    Introduce the operator
    $$
    \tilde{h}_{\gamma, \eps} : = h_{0,0}+  \sum_{k \in \Z} \lambda_k(\omega)^2\left(\Td w(s-k)^2 - n_2 \eps \dot{w}(s-k)^2\right),
    $$
which is
    self-adjoint and $\Z$-ergodic in ${\rm L}^2(\re)$, and denote by $ \tilde{\nu}_{\gamma, \eps}$ its IDS. Then \eqref{f18} implies
    \bel{f21}
    \nu_{\gamma, \eps}(E) \leq \tilde{\nu}_{\gamma, \eps}(E), \quad E \in \re, \quad \eps \geq 0.
    \ee
    Next,
    \bel{f19a}
    \sum_{k \in \Z} \lambda_k(\omega)^2\left(\Td w(s-k)^2 - n_2 \eps \dot{w}(s-k)^2\right) = \sum_{j \in \J_1} \sum_{k \in \Z} \lambda_{k-j}(\omega)^2 v_{j, \eps}(s-k), \quad s \in \re,
    \ee
    the potentials $v_{j, \eps}$ being defined in \eqref{f19}. Denote by $ \tilde{\nu}_{\gamma, \eps. j}$, $j \in \J_1$, the IDS for the operator
    $$
    h_{0,0} + n_1 \sum_{k \in \Z} \lambda_{k-j}(\omega)^2 v_{j, \eps}(s-k),
    $$
which is
    self-adjoint and $\Z$-ergodic in ${\rm L}^2(\re)$. By \eqref{f19a}, and Lemma \ref{fl1},
    \bel{f20}
    \tilde{\nu}_{\gamma, \eps}(E) \leq \sum_{j \in \J_1} \tilde{\nu}_{\gamma, \eps, j}(E), \quad E \in \re, \quad \eps \in \re.
    \ee
    Arguing as in the proof of \eqref{ujf16}, we can show that \eqref{ujf8}, \eqref{m66}, and \eqref{m67}, imply
    \bel{f22}
    \limsup_{E \downarrow 0} \frac{\ln{|\ln{\tilde{\nu}_{\gamma, \eps, j}(E)|}}}{\ln{E}} \leq - \frac{1}{2}, \quad j \in \J_1, \quad \eps < \eps_0^{\rm min}.
    \ee
    Combining \eqref{d5}, \eqref{f21}, \eqref{f20}, and \eqref{f22}, we get
    \bel{f23}
\limsup_{E \downarrow 0} \frac{\ln{|\ln{N_{\gamma}(\mu_1 + E)}|}}{\ln{E}} \leq - \frac{1}{2}.
    \ee
    Putting together \eqref{f14} and \eqref{f23}, we arrive at \eqref{ujf17}.\\
    \end{proof}

{\bf Acknowledgements}. The authors gratefully acknowledge  the partial
support of the Chilean Scientific Foundation {\em Fondecyt} under Grants 1130591  and 1170816.
D. Krej\v{c}i\v{r}\'ik was also partially supported by the GACR Grant No.\ 18-08835S
and by FCT (Portugal) through Project PTDC/MAT-CAL/4334/2014.\\
A considerable part of this work has been done during W. Kirsch's visits to the {\em Pontificia Universidad Cat\'olica de Chile} in 2015 and 2016.
He thanks this university for hospitality. \\
Another substantial part of this work has been done during G. Raikov's visits to the University of Hagen,
Germany, the Czech Academy of Sciences, Prague, and the Institute of Mathematics, Bulgarian Academy of Sciences, Sofia.
He thanks these institutions for financial support and hospitality.


\vfill

{\sc Werner Kirsch}\\
Fakult\"at f\"ur Mathematik und Informatik\\
FernUniversit\"at in Hagen\\
Universit\"atsstrasse 1\\
D-58097 Hagen, Germany\\
E-mail: werner.kirsch@fernuni-hagen.de\\

{\sc David Krej\v{c}i\v{r}\'ik}\\
Department of Mathematics\\
 Faculty of Nuclear Sciences and Physical Engineering\\
Czech Technical University in Prague\\
 Trojanova 13\\
  12000 Prague 2, Czech Republic\\
E-mail: david.krejcirik@fjfi.cvut.cz\\

{\sc Georgi Raikov}\\
Facultad de Matem\'aticas\\
Pontificia Universidad Cat\'olica de Chile\\
Av. Vicu\~na Mackenna 4860\\ Santiago de Chile\\
E-mail: graikov@mat.uc.cl


\begin{thebibliography} {[10]}
\frenchspacing \baselineskip=12 pt plus 1pt minus 1pt

 \bibitem{bv0}  {\sc D. Borisov, I. Veseli\'c}, {\em Low lying spectrum of weak-disorder quantum waveguides}, J. Stat. Phys. {\bf 142} (2011), 58--77.

  \bibitem{bv}  {\sc D. Borisov, I. Veseli\'c}, {\em Low lying eigenvalues of randomly curved quantum waveguides}, J. Funct. Anal. {\bf 265} (2013),  2877--2909.



\bibitem{bhk}{\sc Ph. Briet, H. Hammedi, D. Krej\v{c}i\v{r}\'ik}, {\em Hardy inequalities in globally twisted waveguides}, Lett. Math. Phys. {\bf 105} (2015), 939--958.


\bibitem{bkr}{\sc Ph. Briet, H. Kova\v{r}\'ik, G. Raikov}, {\em  Scattering in twisted waveguides},  J. Funct. Anal. {\bf 266} (2014),  1--35.

\bibitem{bkrs}{\sc Ph. Briet, H. Kova\v{r}\'ik, G. Raikov, E. Soccorsi}, {\em  Eigenvalue asymptotics in a twisted waveguide}, Comm. Partial Differential Equations {\bf 34} (2009),  818--836.



\bibitem{cdv}{\sc  Y. Colin de Verdi\`ere}, {\em Sur les singularit\'es de van Hove g\'en\'eriques},  In: Analyse globale et physique math\'ematique (Lyon, 1989). M\'em. Soc. Math. France  {\bf 46} (1991), 99--110.

\bibitem{deo}{\sc C. de Oliveira}, {\em Quantum singular operator limits of thin Dirichlet tubes via $\Gamma$-convergence},
Rep. Math. Phys. {\bf 67} (2011),  1--32.

 \bibitem{ekk}{\sc  T. Ekholm, H. Kova\v{r}\'ik, D. Krej\v{c}i\v{r}\'ik}, {\em A Hardy inequality in twisted waveguides}, Arch. Ration. Mech. Anal. {\bf 188}  (2008), 245--264.



\bibitem{ek}{\sc  P. Exner, H. Kova\v{r}\'ik}, {\em Spectrum of the Schr\"odinger operator in a perturbed periodically twisted tube}, Lett. Math. Phys. {\bf 73} (2005), 183--192.

     \bibitem{ek_book}{\sc  P. Exner, H. Kova\v{r}\'ik}, {\em Quantum Waveguides}, Theoretical and Mathematical Physics, Springer, Cham, 2015.




\bibitem{hlmw1}{\sc T. Hupfer, H. Leschke, P.  M\"uller, S. Warzel}, {\em Existence and uniqueness of the integrated density of states for Schr\"odinger  operators with magnetic fields and unbounded random potentials}, Rev. Math. Phys. {\bf 13} (2001),  1547--1581.


\bibitem{kirsch}{\sc W. Kirsch}, {\em Random Schr\"odinger Operators. A Course}, In: Schr\"odinger operators (S{\o}nderborg, 1988), 264--370, Lecture Notes in Phys., {\bf 345}, Springer, Berlin, 1989.


\bibitem{kma}{\sc W. Kirsch, F. Martinelli}, {\em On the spectrum of Schr\"odinger operators with a random potential}, Comm. Math. Phys. {\bf 85} (1982),  329--350.

\bibitem{kme}{\sc W. Kirsch, B. Metzger}, {\em The integrated density of states for random Schrödinger operators.} Spectral theory and mathematical physics: a Festschrift in honor of Barry Simon's 60th birthday, 649--696, Proc. Sympos. Pure Math., 76, Part 2, Amer. Math. Soc., Providence, RI, 2007.

\bibitem{kr}{\sc W. Kirsch, G. Raikov}, {\em Lifshits tails for squared potentials}, Preprint: arXiv:1704.01435 (2017) (to appear in Ann. Henri Poincar\'e).



    \bibitem{ksim1}{\sc W. Kirsch, B. Simon}, {\em  Lifshitz tails for periodic plus random potentials}, J. Statist. Phys. {\bf 42} (1986),  799--808.



    \bibitem{klst}{\sc F. Kleespies, P. Stollmann}, {\em Lifshitz asymptotics and localization for random quantum waveguides}, Rev. Math. Phys. {\bf 12}  (2000), 1345--1365.



    \bibitem{klonak}{\sc F. Klopp, S. Nakamura}, {\em Spectral extrema and Lifshitz tails for non-monotonous alloy type models}, Comm. Math. Phys. {\bf 287}  (2009), 1133--1143.



     \bibitem{krej}{\sc  D. Krej\v{c}i\v{r}\'ik}, {\em Twisting versus bending in quantum waveguides. Analysis on graphs and its applications}, 617--637, Proc. Sympos. Pure Math., {\bf 77}, Amer. Math. Soc., Providence, RI, 2008.

\bibitem{KL}
{\sc  D.~Krej\v{c}i\v{r}\'ik, Z.~Lu},
{\em Location of the essential spectrum in curved quantum layers},
J. Math. Phys. \textbf{55} (2014), 083520.


\bibitem{ks}{\sc  D. Krej\v{c}i\v{r}\'ik, H. \v{S}ediv\'akov\'a}, {\em The effective Hamiltonian in curved quantum waveguides under mild regularity assumptions}, Rev. Math. Phys. {\bf 24} (2012), 1250018, 39 pp.

\bibitem{kz}
{\sc  D.~Krej\v{c}i\v{r}\'ik, E.~Zuazua},
{\em The Hardy inequality and the heat equation in twisted tubes},
J. Math. Pures Appl. {\bf 94} (2010), 277--303.


\bibitem{najar}{\sc H. Najar}, {\em Lifshitz tails for acoustic waves in random quantum waveguide}, J. Stat. Phys. {\bf 128} (2007),  1093--1112.

\bibitem{pf}{\sc L. Pastur, A. Figotin}, {\em Spectra of Random and Almost-Periodic Operators}, Grundlehren der Mathematischen Wissenschaften  {\bf 297} Springer-Verlag, Berlin, 1992.

\bibitem{r}{\sc G. Raikov}, {\em Spectral asymptotics for waveguides with perturbed periodic twisting}, J. Spectr. Theory {\bf 6} (2016), 331--372.

\bibitem {RS4} {\sc M. Reed, B. Simon}, {\it Methods of Modern
    Mathematical Physics IV: Analysis of Operators},  Academic Press, 1978.


\bibitem{shen}{\sc Z. Shen},  {\em Lifshitz tails for Anderson models with sign-indefinite single-site potentials}, Math. Nachr. {\bf 288} (2015), 1538--1563.\\
\end{thebibliography}
\end{document}